\documentclass[11pt,leqno]{article}
\usepackage[margin=1in]{geometry} 
\geometry{letterpaper}

\usepackage{amssymb,amsfonts,amsmath,bbm,mathrsfs,stmaryrd,mathtools,mathabx}
\usepackage{xcolor}
\usepackage{url}

\usepackage{extarrows}

\usepackage[shortlabels]{enumitem}
\usepackage{tensor}

\usepackage{xr}
\externaldocument[ORIG-]{}

\usepackage[T1]{fontenc}

\usepackage[colorlinks,
linkcolor=black!75!red,
citecolor=blue,
pdftitle={},
pdfproducer={pdfLaTeX},
pdfpagemode=None,
bookmarksopen=true,
bookmarksnumbered=true,
backref=page]{hyperref}

\usepackage{tikz}
\usetikzlibrary{arrows,calc,decorations.pathreplacing,decorations.markings,intersections,shapes.geometric,through,fit,shapes.symbols,positioning,decorations.pathmorphing}

\usepackage{braket}

\usepackage[amsmath,thmmarks,hyperref]{ntheorem}
\usepackage{cleveref}

\newcommand\nthalias[1]{\AddToHook{env/#1/begin}{\crefalias{lemma}{#1}}}

\nthalias{definition}
\nthalias{example}
\nthalias{examples}
\nthalias{remark}
\nthalias{remarks}
\nthalias{convention}
\nthalias{notation}
\nthalias{construction}
\nthalias{theoremN}
\nthalias{propositionN}
\nthalias{corollaryN}
\nthalias{lemma}
\nthalias{proposition}
\nthalias{corollary}
\nthalias{theorem}
\nthalias{conjecture}
\nthalias{question}
\nthalias{assumption}


\creflabelformat{enumi}{#2#1#3}

\crefname{section}{Section}{Sections}
\crefformat{section}{#2Section~#1#3} 
\Crefformat{section}{#2Section~#1#3} 

\crefname{subsection}{\S}{\S\S}
\AtBeginDocument{%
  \crefformat{subsection}{#2\S#1#3}%
  \Crefformat{subsection}{#2\S#1#3}%
}

\crefname{subsubsection}{\S}{\S\S}
\AtBeginDocument{%
  \crefformat{subsubsection}{#2\S#1#3}%
  \Crefformat{subsubsection}{#2\S#1#3}%
}

%

\theoremstyle{plain}

\newtheorem{lemma}{Lemma}[section]
\newtheorem{proposition}[lemma]{Proposition}
\newtheorem{corollary}[lemma]{Corollary}
\newtheorem{theorem}[lemma]{Theorem}


\theoremstyle{plain}
\theoremnumbering{Alph}
\newtheorem{theoremN}{Theorem}

\theoremstyle{plain}
\theorembodyfont{\upshape}
\theoremsymbol{\ensuremath{\blacklozenge}}

\newtheorem{remark}[lemma]{Remark}
\newtheorem{remarks}[lemma]{Remarks}

\newtheorem{notation}[lemma]{Notation}

\crefname{definition}{definition}{definitions}
\crefformat{definition}{#2definition~#1#3} 
\Crefformat{definition}{#2Definition~#1#3} 

\crefname{ex}{example}{examples}
\crefformat{example}{#2example~#1#3} 
\Crefformat{example}{#2Example~#1#3} 

\crefname{exs}{example}{examples}
\crefformat{examples}{#2example~#1#3} 
\Crefformat{examples}{#2Example~#1#3} 

\crefname{remark}{remark}{remarks}
\crefformat{remark}{#2remark~#1#3} 
\Crefformat{remark}{#2Remark~#1#3} 

\crefname{remarks}{remark}{remarks}
\crefformat{remarks}{#2remark~#1#3} 
\Crefformat{remarks}{#2Remark~#1#3} 

\crefname{convention}{convention}{conventions}
\crefformat{convention}{#2convention~#1#3} 
\Crefformat{convention}{#2Convention~#1#3} 

\crefname{notation}{notation}{notations}
\crefformat{notation}{#2notation~#1#3} 
\Crefformat{notation}{#2Notation~#1#3} 

\crefname{table}{table}{tables}
\crefformat{table}{#2table~#1#3} 
\Crefformat{table}{#2Table~#1#3}

\crefname{lemma}{lemma}{lemmas}
\crefformat{lemma}{#2lemma~#1#3} 
\Crefformat{lemma}{#2Lemma~#1#3} 

\crefname{proposition}{proposition}{propositions}
\crefformat{proposition}{#2proposition~#1#3} 
\Crefformat{proposition}{#2Proposition~#1#3} 

\crefname{propositionN}{proposition}{propositions}
\crefformat{propositionN}{#2proposition~#1#3} 
\Crefformat{propositionN}{#2Proposition~#1#3} 

\crefname{corollary}{corollary}{corollaries}
\crefformat{corollary}{#2corollary~#1#3} 
\Crefformat{corollary}{#2Corollary~#1#3} 

\crefname{corollaryN}{corollary}{corollaries}
\crefformat{corollaryN}{#2corollary~#1#3} 
\Crefformat{corollaryN}{#2Corollary~#1#3} 

\crefname{theorem}{theorem}{theorems}
\crefformat{theorem}{#2theorem~#1#3} 
\Crefformat{theorem}{#2Theorem~#1#3} 

\crefname{theoremN}{theorem}{theorems}
\crefformat{theoremN}{#2theorem~#1#3} 
\Crefformat{theoremN}{#2Theorem~#1#3} 

\crefname{enumi}{}{}
\crefformat{enumi}{#2#1#3}
\Crefformat{enumi}{#2#1#3}

\crefname{assumption}{assumption}{Assumptions}
\crefformat{assumption}{#2assumption~#1#3} 
\Crefformat{assumption}{#2Assumption~#1#3} 

\crefname{construction}{construction}{Constructions}
\crefformat{construction}{#2construction~#1#3} 
\Crefformat{construction}{#2Construction~#1#3} 

\crefname{question}{question}{Questions}
\crefformat{question}{#2question~#1#3} 
\Crefformat{question}{#2Question~#1#3} 

\crefname{equation}{}{}
\crefformat{equation}{(#2#1#3)} 
\Crefformat{equation}{(#2#1#3)}


\numberwithin{equation}{section}

\theoremstyle{nonumberplain}
\theoremsymbol{\ensuremath{\blacksquare}}

\newtheorem{proof}{Proof}

\newcommand\bC{{\mathbb C}}
\newcommand\bD{{\mathbb D}}

\newcommand\bR{{\mathbb R}}

\newcommand\bT{{\mathbb T}}

\newcommand\bZ{{\mathbb Z}}

\newcommand\cA{{\mathcal A}}

\newcommand\cC{{\mathcal C}}
\newcommand\cD{{\mathcal D}}

\newcommand\cL{{\mathcal L}}

\newcommand\cO{{\mathcal O}}



\DeclareMathOperator{\id}{id}

\DeclareMathOperator{\im}{\mathrm{im}}




\newcommand{\cat}[1]{\textsc{#1}}


\newcommand{\xrightarrowdbl}[2][]{%
  \xrightarrow[#1]{#2}\mathrel{\mkern-14mu}\rightarrow
}



\title{Local presentability and monadicity of forgetful functors\\ for operator algebraic categories}
\author{Alexandru Chirvasitu and Ian Thompson}


\begin{document}

\date{}

\newcommand{\Addresses}{{
  \bigskip
  \footnotesize

  \textsc{Department of Mathematics, University at Buffalo, Buffalo,
    NY 14260-2900, USA}\par\nopagebreak \textit{E-mail address}:
  \texttt{achirvas@buffalo.edu}

  \medskip

  \textsc{Department of Mathematical Sciences, University of Copenhagen, Universitetsparken 5, 2100 Copenhagen, Denmark}
    \par\nopagebreak
    \textit{E-mail address}: \texttt{ian@math.ku.dk}
  
}}

\maketitle

\begin{abstract}
In recent work of Lindenhovius and Zamdzhiev, it was established that the category of complete operator spaces, with completely contractive linear maps as morphisms, is locally countably presentable. In this work, we extend their conclusion to the non-complete setting and prove that the categories of operator systems, (Archimedean) order unit spaces, and unital operator algebras are all locally countably presentable as well. This is established through an analysis of forgetful functors and the identification of Eilenberg-Moore categories. We provide a complete understanding of adjunction and monadicity for forgetful functors between these categories, together with the categories of $C^*$-algebras, Banach spaces, and normed spaces. In addition, for various subcategories of function-theoretic objects, we investigate completeness and local presentability through Kadison's duality theorem.
\end{abstract}

\noindent {\em Key words: locally presentable category; monadic; operator space; operator system; non self-adjoint operator algebra; $C^*$-algebra; Banach and normed spaces; Eilenberg-Moore category}

\vspace{.5cm}

\noindent{MSC 2020: 18C35; 18C15; 46L07; 47L25; 47L30; 46A55; 47L75; 18C20

}


\section*{Introduction}

A category is said to be locally presentable if each object can be understood, in an appropriate sense, at the level of a colimit of appropriately small objects \cite{ar}. Locally presentable categories enjoy a significant number of regularity properties, such as completeness and cocompleteness, the existence of certain factorization systems, as well as being both well-powered and cowell-powered. As we shall see in this manuscript, local presentability also allows for a significant simplification of when a given functor is monadic. Among some of the most frequently encountered locally presentable categories are the categories of Banach spaces (or, alternatively, normed spaces) with contractive linear maps as morphisms, as well as the category of $C^*$-algebras, with $*$-homomorphisms as morphisms.

In this work, we establish local presentability for many categories that are often considered throughout functional analysis and mathematical physics. This is prompted by recent work by Lindenhovius and Zamdzhiev \cite{2412.20999v1}, where they established that the category of so-called complete operator spaces, together with completely contractive linear maps as morphisms, is locally $\aleph_1$-presentable. While Banach spaces and normed spaces can be identified with subspaces of commutative $C^*$-algebras, operator spaces are those linear spaces that are subspaces of possibly non-commutative $C^*$-algebras. To capture the ambient information defining an operator space, it is necessary that a description of objects is reflected in the structure of the matrix norms attached to the subspace (see \cite[Theorem 13.4]{pls-bk}).

Here, we extend Lindenhovius and Zamdzhiev's conclusion to non-complete operator spaces and establish local $\aleph_1$-presentability for a wealth of other categories of natural interest. One may consult \Cref{not:osos} and \Cref{se:osoa} for the relevant definitions.

\begin{theoremN}\label{th:A}
    The following categories are locally $\aleph_1$-presentable. \begin{enumerate}[(a),wide]
        \item The category $\cat{OSp}$ of (possibly non-complete) operator spaces with completely contractive linear maps as morphisms.
        \item The category $\cat{OSys}$ of operator systems with unital completely positive maps as morphisms.
        \item The category $\cat{(A)OU}$ of (Archimedean) order unit spaces, with positive unital maps as morphisms.
        \item The category $\cat{OAlg}$ of unital (non self-adjoint) operator algebras with unital completely contractive maps as morphisms.
    \end{enumerate}
\end{theoremN}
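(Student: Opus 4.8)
The plan is to deduce all four assertions from a single transfer principle for Eilenberg--Moore categories: if $\mathcal{B}$ is locally $\aleph_1$-presentable and $U\colon\mathcal{A}\to\mathcal{B}$ is monadic with $U$ preserving $\aleph_1$-filtered colimits, then $\mathcal{A}$ is locally $\aleph_1$-presentable. Indeed, monadicity gives an equivalence $\mathcal{A}\simeq\mathcal{B}^{T}$ with $T=UF$; the left adjoint $F$ preserves all colimits, so $T$ preserves $\aleph_1$-filtered colimits, i.e.\ $T$ is an $\aleph_1$-accessible monad, whence $\mathcal{B}^{T}$ is locally $\aleph_1$-presentable by \cite{ar}. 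The base $\mathcal{B}$ will always be a category already known to be locally $\aleph_1$-presentable --- the category $\cat{Norm}$ of normed spaces, $\cat{Ban}$ of Banach spaces, a countable power thereof, or the category of $C^{*}$-algebras --- in each of which the $\aleph_1$-presentable objects are the separable ones.

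I would set the forgetful functors up as a ladder rather than treating each category in isolation. For (a) the one genuine subtlety of the setup is that the functor $\cat{OSp}\to\cat{Norm}$ remembering only the ground norm $\|\cdot\|_{1}$ is not conservative --- distinct matricial structures (e.g.\ the minimal and maximal ones) can share a ground norm --- so it cannot be monadic; the correct base must record the entire tower of matrix norms, so I would work over a base of matricially normed spaces (morally $\prod_{n\ge 1}\cat{Norm}$ equipped with the coherence tying the levels to a single underlying space), sending $(V,\{\|\cdot\|_{n}\})$ to the family $(M_{n}(V),\|\cdot\|_{n})_{n}$. For (b), (c), (d) I would instead use the evident functors $\cat{OSys}\to\cat{OSp}$, $\cat{OAlg}\to\cat{OSp}$, and $\cat{(A)OU}\to$ (ordered normed spaces, or $\cat{OSys}$), each forgetting the added involutive, multiplicative, or order data and its compatibility axioms. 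By the transfer principle the conclusion then propagates up the ladder, provided each stage is monadic and $\aleph_1$-continuous over a base already established.

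Monadicity itself I would verify through Beck's theorem. The left adjoints exist and are the expected free constructions (the maximal operator-space functor, the free unital operator algebra, enveloping and Archimedeanization functors), so the content is twofold: conservativity, and creation of coequalizers of $U$-split pairs. Over a base recording all matrix levels, conservativity is essentially built in --- an isomorphism in the base is a levelwise isometry, which is precisely a complete isometry and hence an isomorphism of operator spaces --- while for the order and multiplicative variants one must check that a positive (resp.\ completely contractive multiplicative) bijection detected as iso by the base is automatically an order (resp.\ algebra) isomorphism. For $U$-split coequalizers one transports the chosen splitting in the base back to equip the coequalizer with matrix-norm, order, or product structure; these are routine but must be done once per category, always keeping in mind that ``is an isomorphism'' is governed by all matrix levels simultaneously.

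The main obstacle, and the reason the presentability rank is exactly $\aleph_1$ and not $\aleph_0$, is the $\aleph_1$-accessibility of the forgetful functors. An operator space, system, or algebra is a vector space carrying a \emph{countable} family of norms subject to Ruan-type axioms that link the levels, so resolving one as a colimit of small pieces genuinely requires $\aleph_1$-filtered diagrams. Here the non-complete setting is an advantage: an $\aleph_1$-filtered colimit can be formed levelwise in the base with no completion step, so the crux is to check that the colimit matrix norms again satisfy the defining axioms and that the comparison map into the colimit computed in $\mathcal{A}$ is an isomorphism. I expect the heart of the argument to be precisely this verification --- that a levelwise $\aleph_1$-filtered colimit of operator spaces is again an operator space, and likewise in the order and multiplicative cases --- with the countability of the matrix tower being exactly what makes $\aleph_1$-filtered (rather than finitely filtered) colimits suffice, thereby recovering and extending the Lindenhovius--Zamdzhiev threshold \cite{2412.20999v1}.
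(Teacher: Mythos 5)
Your overall architecture---transferring local $\aleph_1$-presentability along monadic functors whose induced monads preserve $\aleph_1$-filtered colimits, via \cite[\S 2.78, Remark]{ar}---is the same engine the paper uses for $\cat{OAlg}$, and in the idempotent-monad guise of reflective subcategories it is also how the paper handles $\cat{OSys}$ and $\cat{(A)OU}$. But the proposal has genuine gaps at exactly the points where the work lies. For (a) you propose monadicity of $\cat{OSp}$ over a base of matricially normed spaces: you establish neither that this base is locally $\aleph_1$-presentable (it is not literally $\prod_n\cat{Norm}$, because of the coherence constraint) nor that the left adjoint exists. Moreover, since a morphism of your base is a single linear map contractive at every level, the forgetful functor you describe is \emph{fully faithful}, so monadicity amounts to exhibiting a reflector from arbitrary matrix-normed spaces onto those satisfying Ruan's axioms---a nontrivial claim you do not address. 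The paper avoids this entirely, deducing the $\cat{OSp}$ case from Lindenhovius--Zamdzhiev's result for $\cat{OSp}^{\wedge}$ together with the fact that $\aleph_1$-directed colimits are computed set-theoretically. For (d) you correctly isolate the crux---$\aleph_1$-accessibility of the free operator-algebra monad---but then write that you ``expect'' the verification to go through. In the paper this is precisely \Cref{le:haag.al1.pres}: a genuine computation showing the Haagerup tensor product commutes with $\aleph_1$-directed colimits, using the factorization formula for the Haagerup norm and repeated appeals to $\aleph_1$-directedness to capture countably many approximating representations. Without that lemma the theorem for $\cat{OAlg}$ is not proved.

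For (c) the proposed forgetful functor $\cat{(A)OU}\to\cat{OSys}$ points the wrong way: an order unit space carries no canonical operator system structure (the evident forgetful functor goes $\cat{OSys}\to\cat{(A)OU}$, discarding matrix levels), and the alternative base of ``ordered normed spaces'' is neither pinned down nor shown to be locally presentable. The paper instead realizes $\cat{OU}$ as reflective in ordered vector spaces with a distinguished positive element---locally $\aleph_0$-presentable as an essentially algebraic theory---and then $\cat{AOU}$ as reflective in $\cat{OU}$ via the Archimedeanization of \cite{zbMATH05592601}. In summary: the transfer principle is the right skeleton and matches the paper's, but the load-bearing lemmas (local presentability of the chosen bases, existence of the left adjoints, conservativity and split-coequalizer creation, and above all the colimit-preservation computations) are asserted rather than carried out, and one of the proposed forgetful functors does not exist as described.
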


Operator systems and completely positive maps are not only foundational to the theory of $C^*$-algebras (see \cite{blk}, for example), but they also appear quite frequently throughout quantum physics. On the other hand, unital operator algebras play a recurring role in operator theory and, more specifically, within the theory of operator dilations \cite{pls-bk}.

Many of the proof techniques needed to establish \Cref{th:A} are derived from an analysis of monadic forgetful functors. Along the way, our arguments allow us to establish a conclusion of separate interest.

\begin{theoremN}\label{th:B}
    Consider the triple $(\cat{OSp}, \otimes_h, {\bf 1} = \mathbb{C})$ where $\otimes_h$ denotes the Haagerup tensor product of operator spaces. Then the triple $T = (\cat{OSp}, \otimes_h, \mathbb{C})$ is a monad and $\cat{OAlg}\simeq\cat{OSp}^T$ is the Eilenberg-Moore category for the monad.
\end{theoremN}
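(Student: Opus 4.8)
The plan is to realize $\cat{OAlg}$ as the category of monoid objects in the monoidal category $(\cat{OSp}, \otimes_h, \mathbb{C})$, and then to invoke the standard correspondence between monoids in a suitably cocomplete monoidal category and algebras for the associated free-monoid monad. This makes the notation transparent: the monad $T$ denoted by the triple $(\cat{OSp},\otimes_h,\mathbb{C})$ is the free-monoid (operator-space tensor algebra) monad whose underlying endofunctor, unit, and multiplication are all built from $\otimes_h$ and $\mathbb{C}$.

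First I would check that $(\cat{OSp}, \otimes_h, \mathbb{C})$ is genuinely monoidal: the Haagerup tensor product is functorial on completely contractive maps, associative up to completely isometric natural isomorphism, and $\mathbb{C}$ is a two-sided unit with $\mathbb{C} \otimes_h E \cong E \cong E \otimes_h \mathbb{C}$. The associativity, which is the distinctive feature of $\otimes_h$ among operator-space tensor products, is the one ingredient I would cite rather than reprove. Next, using the universal property of $\otimes_h$ (it linearizes completely contractive bilinear maps into complete contractions out of the tensor product), I would identify the data of a monoid $(A, m\colon A\otimes_h A \to A, \eta\colon \mathbb{C} \to A)$ with an operator space $A$ carrying an associative, completely contractive, unital multiplication whose unit has norm one. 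That such an $A$ is exactly a unital operator algebra — i.e.\ that it embeds completely isometrically and unitally as a subalgebra of some $B(H)$ — is the Blecher--Ruan--Sinclair abstract characterization, which I would quote; since monoid morphisms are precisely the unital completely contractive homomorphisms, this gives $\cat{OAlg} \cong \mathrm{Mon}(\cat{OSp}, \otimes_h, \mathbb{C})$ as categories.

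It then remains to produce $T$ and the equivalence $\mathrm{Mon}(\cat{OSp},\otimes_h,\mathbb{C}) \simeq \cat{OSp}^T$. Here I would use that $\cat{OSp}$ is cocomplete by \Cref{th:A}(a), together with the fact that $E \otimes_h (-)$ and $(-) \otimes_h E$ preserve the relevant colimits — in particular that they are accessible — to invoke Kelly's general free-monoid theorem. This yields the free unital operator algebra $T(E) = \coprod_{n\ge 0} E^{\otimes_h n}$, exhibits $T = UF$ as a monad on $\cat{OSp}$ with the expected unit and multiplication, and identifies its Eilenberg--Moore category with the monoids. Composing the two identifications gives $\cat{OAlg} \simeq \cat{OSp}^T$. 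Alternatively, once the left adjoint $F$ is in hand, one may verify monadicity of the forgetful functor $U\colon \cat{OAlg} \to \cat{OSp}$ directly by Beck's theorem, checking that $U$ reflects isomorphisms and creates coequalizers of $U$-split pairs.

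The main obstacle I anticipate lies at the interface between the analytic and categorical inputs. On the analytic side, the delicate point is that the monad must be built from the \emph{completely contractive} Haagerup tensor product, and one must confirm that an abstract Eilenberg--Moore action $T(A) \to A$ corresponds to an honest completely contractive associative unital multiplication — this is precisely where Blecher--Ruan--Sinclair does the nontrivial work and where the cross-norm normalization matters. On the categorical side, the crux is verifying that $\otimes_h$ interacts well enough with colimits in $\cat{OSp}$ for the free monoid to exist and for the comparison functor to be an equivalence; establishing the needed accessibility of $E \otimes_h (-)$, rather than merely the existence of free monoids, is the step I expect to demand the most care.
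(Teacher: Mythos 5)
Your argument is correct in outline but takes a genuinely different route from the paper's. The paper (see \Cref{th:env.oalg} and the discussion preceding it) also begins with the identification $\cat{OAlg}\simeq\cat{Mon}(\cat{OSp},\otimes_h,\bC)$ --- though, since it \emph{defines} $\cat{OAlg}$ as the category of abstract operator algebras (operator spaces with completely contractive unital multiplication), this is immediate from the universal property of $\otimes_h$ and no appeal to Blecher--Ruan--Sinclair is needed; BRS only enters if one insists on concrete realizations in $\cL(H)$. From there the paper does \emph{not} construct the monad explicitly: it obtains the left adjoint to $U\colon\cat{OAlg}\to\cat{OSp}$ abstractly via the solution-set condition, and then verifies Beck's (reflexive) monadicity criterion directly, the key analytic input being the \emph{projectivity} of $\otimes_h$, which ensures that the operator-space quotient $B/\im(d_0-d_1)$ of a $U$-contractible pair carries a completely contractive multiplication. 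Your route instead invokes the general free-monoid/monadicity theorem for monoids in a locally presentable monoidal category with suitably cocontinuous tensor (essentially \cite{zbMATH05312006}, which the paper itself cites for the vector-space case), yielding the explicit tensor-algebra description $T(E)=\coprod_{n\ge 0}E^{\otimes_h n}$ --- more informative, and consistent with Pisier's construction noted in \Cref{re:pisier.osp2oalg}. The one step you defer, the cocontinuity/accessibility of $E\otimes_h(-)$ and $(-)\otimes_h F$, is a genuine obligation but is fillable: each functor is a left adjoint by the completely isometric identification of $CB(E\otimes_h F,G)$ with $CB(E,CB(F,G))$ \cite[\S 9.4]{er_os_2000}, so all colimits are preserved (the paper separately proves the weaker $\aleph_1$-directed statement as \Cref{le:haag.al1.pres}, but only for the presentability of $\cat{OAlg}$, not for monadicity). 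In exchange for this extra verification your approach dispenses with the solution-set and split-coequalizer bookkeeping; your proposed fallback of checking Beck's theorem directly once the left adjoint exists is, in effect, the paper's argument.
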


In \Cref{subse:fn.sys}, we turn our attention to the local presentability of subcategories formed through appropriate notions of commutativity. To this end, we consider the category $\cat{OSys}_c^\wedge\subset\cat{OSys}$ of complete operator subsystems of commutative $C^*$-algebras. Kadison proved that $\cat{OSys}_c^\wedge$ is contravariantly equivalent to the category of compact convex subsets of locally convex topological vector spaces, with continuous affine maps as morphisms \cite[Theorem II.1.8.]{alfsen2012compact}. Using Kadison's duality theorem and additional operator algebraic techniques, we are able to establish the following.

\begin{theoremN}\label{th:C}
    The following statements hold.\begin{enumerate}[(a),wide]
        \item The category $\cat{OSys}_c^\wedge$ is locally $\aleph_1$-presentable.
        \item Consider the subcategories $\cat{OSys}_s^\wedge$ and $\cat{OSys}_{bs}^\wedge$ of $\cat{OSys}_c^\wedge$, whose objects are contravariantly equivalent to (Bauer) simplices. Then $\cat{OSys}_s^\wedge$ and $\cat{OSys}_{bs}^\wedge$ do not have equalizers and, in particular, are not locally $\aleph_1$-presentable.
        \item Let $\cat{OSp}_c$ and $\cat{OSp}_c^\wedge$ denote the subcategories of $\cat{OSp}$ whose objects are (respectively, complete) operator spaces that are completely isometrically embeddable into a commutative $C^*$-algebra. Then $\cat{OSp}_c$ and $\cat{OSp}_c^\wedge$ are locally $\aleph_1$-presentable.
    \end{enumerate}
\end{theoremN}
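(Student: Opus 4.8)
The plan is to handle parts (a) and (c) by identifying these ``commutative'' subcategories with classical functional-analytic categories already known to be locally $\aleph_1$-presentable, and to handle (b) by transporting the existence of equalizers through Kadison's duality into a question about coequalizers of (Bauer) simplices; throughout I use that a locally $\aleph_1$-presentable category is complete, so that the \emph{absence} of equalizers already precludes local presentability. For (c) the governing principle is that a bounded linear map into a commutative $C^*$-algebra is automatically completely bounded with the same norm: for $u\colon W\to C(X)$ one has $\|u\|_{\mathrm{cb}}=\sup_{x\in X}\|\delta_x\circ u\|_{\mathrm{cb}}=\sup_{x\in X}\|\delta_x\circ u\|=\|u\|$, since functionals are automatically completely bounded with $\|\cdot\|_{\mathrm{cb}}=\|\cdot\|$. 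Hence an operator space that embeds completely isometrically into a commutative $C^*$-algebra carries the minimal operator space structure $\mathrm{MIN}(V)$ of its underlying normed space $V$, and between two such objects the completely contractive maps are precisely the contractions of the underlying spaces. The underlying-space functor therefore restricts to equivalences of $\cat{OSp}_c$ and $\cat{OSp}_c^\wedge$ with the categories of normed, respectively Banach, spaces and linear contractions; as the latter are locally $\aleph_1$-presentable, so are the former.

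Part (a) is the order-theoretic analogue: a unital positive map into a commutative $C^*$-algebra is automatically unital completely positive. Thus every object of $\cat{OSys}_c^\wedge$ is the minimal operator system $\mathrm{MIN}(V)$ of a complete Archimedean order unit space $V$, and unital completely positive maps between such objects coincide with the positive unital maps of the underlying order unit spaces. This yields an equivalence of $\cat{OSys}_c^\wedge$ with the category of complete Archimedean order unit spaces and positive unital maps, whose local $\aleph_1$-presentability follows from \Cref{th:A} exactly as Banach spaces inherit it from normed spaces: the norm completion is a reflection of order unit spaces onto the complete ones preserving $\aleph_1$-filtered colimits, so the complete subcategory remains locally $\aleph_1$-presentable.

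For (b), Kadison's duality identifies $\cat{OSys}_c^\wedge$ with the opposite of the category of compact convex sets and continuous affine maps, carrying each function system to its state space. Under this duality the equalizer of a parallel pair $f,g\colon A\to B$ in $\cat{OSys}_s^\wedge$ (respectively $\cat{OSys}_{bs}^\wedge$) corresponds to the coequalizer of the dual affine maps $S(B)\rightrightarrows S(A)$ of the (Bauer) simplices. Since $\cat{OSys}_c^\wedge$ is complete by (a), the honest equalizer of $f$ and $g$ exists there as the function subsystem $E=\{a\in A: f(a)=g(a)\}$; it suffices to produce a pair for which the state space of $E$ is not a (Bauer) simplex and for which no simplex-object can serve as equalizer. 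For the latter I would argue that if $e'\colon E'\to A$ were an equalizer in the subcategory, then $E'$, being a commutative function system that equalizes $f$ and $g$, factors uniquely through $E$ in $\cat{OSys}_c^\wedge$; conversely, testing the universal property of $E'$ against a sufficiently rich supply of simplex-objects mapping into $A$ through $E$ forces the comparison $E'\to E$ to be an isomorphism, contradicting that $E$ is not a simplex-object. The two cases are treated by analogous pairs, the Bauer case requiring the additional input that the relevant quotients destroy closedness of the extreme boundary.

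The routine ingredients are the two ``commutativity forces positivity/boundedness'' lemmas and the verification that the displayed functors are equivalences; these are standard once the minimal-structure identification is recorded. The genuine obstacle lies in (b): one must show not merely that the naive equalizer fails to be a (Bauer) simplex, but that \emph{no} object of the subcategory satisfies the universal property at all. I expect this to demand an explicitly computed example together with a separating family of simplex test objects detecting that the comparison map to the ambient equalizer is forced to be an isomorphism.
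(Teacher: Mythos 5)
Your parts (a) and (c) are essentially sound; the real problem is part (b).

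For (c) your argument is the same as the paper's (\Cref{th:comm.osp}): objects of $\cat{OSp}_c^{\bullet}$ carry the minimal quantization of their underlying normed space, completely contractive maps into minimal operator spaces are exactly the contractions, and so $\cat{OSp}_c^{\bullet}\simeq \cat{Norm}^{\bullet}$, which is locally $\aleph_1$-presentable. For (a) you take a genuinely different route. The paper (\Cref{th:osfcn.fgt} and \Cref{th:osfcn.a1.pres}) shows that $\cat{OSys}_c^{\wedge}$ is a full reflective subcategory of $\cat{OSys}$ whose inclusion preserves $\aleph_1$-directed colimits, the latter being checked by passing through Kadison duality and the set-theoretic computation of such colimits of commutative $C^*$-algebras. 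You instead observe that positive maps into commutative $C^*$-algebras are automatically completely positive, so that $\cat{OSys}_c^{\wedge}$ is equivalent to the category of complete Archimedean order unit spaces, and then transport presentability from $\cat{(A)OU}$ (\Cref{pr:aou.pres}) through the completion reflection. This is a legitimate and arguably more economical argument --- it treats (a) exactly as the paper treats (c) --- provided you actually record the two ``automatic complete positivity'' lemmas and check that the inclusion of complete AOU spaces preserves $\aleph_1$-directed colimits (it is the \emph{inclusion}, not the reflection, that needs this; the reflection, being a left adjoint, preserves all colimits).

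Part (b) contains a genuine gap, and you have correctly located it yourself: you produce neither the example nor the argument that no object of the subcategory can satisfy the universal property. Worse, the closing move you propose --- forcing the comparison map $E'\to E$ to the ambient equalizer to be an isomorphism by testing against enough simplex objects --- is not the right target and I do not believe it can be made to work. A putative equalizer $E'$ in $\cat{OSys}_s^{\wedge}$ need only compare with the ambient equalizer $E$ via some morphism; a separating family of simplex test objects can at best force injectivity of the dual comparison map, never surjectivity, so no contradiction of the form ``$E'\cong E$ but $E$ is not a simplex object'' is available. The paper's proof (\Cref{pr:smplx.no.eqs}) proceeds differently: dualizing, it takes two maps $\Delta^0\to\Delta^3$ hitting the midpoints of two opposite edges of a tetrahedron, whose coequalizer among compact convex sets is a square $I^2$. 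If a coequalizer $\Delta$ existed among (Bauer or Choquet) simplices, the induced map $I^2\to\Delta$ would be an embedding (this is the part your ``separating family'' does prove, using maps of $I^2$ onto triangles). The contradiction is then extracted not from surjectivity but from simplex geometry: the four vertices of the embedded square generate pairwise disjoint closed faces of $\Delta$, and the affine dependence $\tfrac{p_0+p_2}{2}=\tfrac{p_1+p_3}{2}$ yields two distinct probability measures supported on disjoint faces with the same resultant, violating the uniqueness of representing measures that characterizes simplices. Without an argument of this kind (or some substitute), your part (b) remains a plan rather than a proof.
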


With regard to \Cref{th:C} (b), we remark that the Bauer simplices are precisely those operator systems that are isomorphic to a commutative $C^*$-algebra (this follows from \cite[Corollary II.4.2.]{alfsen2012compact} and Gelfand's duality theorem). Despite the negative presentability result for this category, it is known that, in fact, the category of commutative $C^*$-algebras with $*$-homomorphisms as morphisms is locally $\aleph_1$-presentable \cite[Theorem 3.28]{ar}.

Finally, we determine the monadicity and existence of adjoints for forgetful functors between several categories from \Cref{th:A} and the categories of $C^*$-algebras, Banach spaces and Banach algebras, as well as normed spaces and normed algebras.

\subsection*{Acknowledgments}

We are grateful for instructive comments from V. Paulsen. The second author was partially supported by an NSERC Postdoctoral Fellowship.


\section{Adjunction and presentability for operator spaces and systems}\label{se.main}

\begin{notation}\label{not:osos}
  Categories that will make an appearance below include 
  \begin{itemize}[wide]
  \item $\cat{OSp}$, the category of \emph{operator spaces} (\cite[\S 13]{pls-bk}, \cite[\S 2.1]{er_os_2000}) with \emph{completely contractive} (or \emph{cc}) \cite[\S 2.1]{er_os_2000} linear maps as morphisms;
  \item $\cat{OSys}$, the category of \emph{operator systems} \cite[\S 13]{pls-bk} with unital \emph{completely positive} \cite[p.176]{pls-bk} (or \emph{ucp}) linear maps;
  \item $\cat{Norm}$ and $\cat{Ban}$, normed and Banach spaces respectively, with contractive maps as morphisms;
  \item $\cat{(A)OU}$, \emph{(Archimedean) order unit spaces} (\cite[\S 2]{kptt}, \cite[Definitions 2.4 and 2.7]{zbMATH05592601}), with positive unital maps as morphisms. 
  \end{itemize}
  For our work, the base field can be chosen as $\bR$ or $\bC$, and most of the discussion can be applied simultaneously. We occasionally make one of the choices to fix the notation.
  
  Unless otherwise specified (and by contrast to a number of sources, such as \cite[Definition 1.2]{pis_os}, \cite[\S 5.1]{er_os_2000} or \cite[Definition 4.2]{2412.20999v1}), we do not assume that such structures (e.g. operator spaces/systems) are complete. We mark categories of complete objects with `$\wedge$' superscripts. For example, $\cat{OSp}^{\wedge}$ denotes the category of complete operator spaces, as considered in \cite{2412.20999v1}.
\end{notation}

As noted in \cite[Proposition 13.3]{pls-bk} or \cite[discussion preceding Proposition 3.2]{kptt}, an operator system $(S,e)$ is also an operator space, with matrix norms $\|\cdot\|_n$ on $M_n(S)$ defined by
\begin{equation*}
  \|s\|_n\le 1
  \Longleftrightarrow
  \begin{pmatrix}
    e_n & s\\
    s^* & e_n
  \end{pmatrix}
  \ge 0
\end{equation*}Furthermore, the definition makes it clear that any unital completely positive map between operator systems is completely contractive and thus, we have a {\it forgetful} (or {\it inclusion}) functor
\begin{equation}\label{eq:osys2osp}
  \cat{OSys}\xrightarrow{\quad U\quad} \cat{OSp}. 
\end{equation}

\begin{theorem}\label{th:osos.fgt}
  The categories $\cat{OSys}$ and $\cat{OSp}$ are both complete and the inclusion functor $U$ is a right adjoint, so that $\cat{OSys}\subset \cat{OSp}$ is a full reflective subcategory. 
\end{theorem}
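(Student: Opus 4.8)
The plan is to establish the three assertions in turn---completeness of $\cat{OSp}$, completeness of $\cat{OSys}$, and the existence of a left adjoint to $U$---the last via the adjoint functor theorem. Since a category with all small products and all equalizers is complete, I would first build these in $\cat{OSp}$. For a family $\{V_i\}_{i\in I}$ the product is the $\ell^\infty$-sum $\{(v_i)\in\prod_i V_i:\sup_i\|v_i\|<\infty\}$ with matrix norms $\|(x_i)\|_n=\sup_i\|x_i\|_{M_n(V_i)}$; Ruan's axioms pass coordinatewise, the projections are completely contractive, and any family of cc maps into the $V_i$ assembles into a cc map into the sum, giving the universal property. The equalizer of $f,g\colon V\to W$ is the kernel $\{v: f(v)=g(v)\}$ carried with the subspace operator space structure. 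No completion is ever formed, so these objects stay inside the non-complete category $\cat{OSp}$.

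I would then repeat the construction in $\cat{OSys}$: the product is the same bounded-tuple space with coordinatewise unit $(e_i)_i$, involution and matrix cones, while the equalizer of two ucp maps is the suboperator system $\{s: f(s)=g(s)\}$, which contains the unit (as $f,g$ are unital), is $*$-closed (as they are $*$-linear), and inherits an Archimedean matrix order from $S$. The decisive point is that $U$ carries these to the limits just built in $\cat{OSp}$. For equalizers this is immediate from the subspace description; for products it follows from the unit-ball criterion, since $\begin{pmatrix} e_n & s\\ s^* & e_n\end{pmatrix}\ge 0$ holds in the product precisely when it holds in each coordinate, forcing the operator-system matrix norm of the product to be $\sup_i\|\cdot\|_{M_n(S_i)}$. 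Hence $U$ preserves products and equalizers, therefore all small limits, and in particular $\cat{OSys}$ is complete.

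With $\cat{OSys}$ complete and locally small and $U$ limit-preserving, it remains to check the solution set condition. Given an operator space $V$ and a cc map $g\colon V\to U(S)$, I would factor it through the operator subsystem $S_g\subseteq S$ generated by $g(V)$---the span of $\{e_S\}\cup g(V)\cup g(V)^*$ with the matrix order inherited from $S$---so that the corestriction $V\to U(S_g)$ is cc and the inclusion $S_g\hookrightarrow S$ is ucp. Because no closure intervenes, $\dim S_g\le 1+2\dim V$, so up to isomorphism the systems $S_g$ range over the set of operator systems of linear dimension at most $1+2\dim V$ (such a structure on a fixed vector space amounts to a choice of unit, involution and matrix cones, of which there is only a set). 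These systems together with the cc maps of $V$ into each form a solution set, and the adjoint functor theorem then produces the left adjoint.

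The step I expect to require the most care is this solution set argument: one must confirm that the structure induced on $S_g$ depends only on the ambient data of $S$, that operator systems of bounded dimension form a set up to isomorphism, and that $e_S\in S_g$ so that the factoring map is genuinely unital. The rest is routine bookkeeping---Ruan's axioms and the Archimedean property for the product, and the faithfulness of $U$ (a ucp map is determined by its underlying linear map)---and combining these with the left adjoint yields the asserted reflective embedding of $\cat{OSys}$ in $\cat{OSp}$.
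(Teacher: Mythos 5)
Your proposal is correct and follows essentially the same route as the paper: construct products as bounded-tuple ($\ell^\infty$-type) sums and equalizers as subobjects in both categories, observe that $U$ preserves them, and then verify the solution-set condition by factoring any cc map $V\to S$ through the $*$-subspace of $S$ spanned by the unit and the image, whose dimension is bounded in terms of $\dim V$, before invoking Freyd's adjoint functor theorem and the fullness of $U$ for reflectivity. The only cosmetic difference is your explicit dimension bound $1+2\dim V$ versus the paper's $\max(\dim X,\aleph_0)$, which serve the same purpose.
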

\begin{proof}
  For the completeness claim, it suffices (\cite[\S V.2, Theorem 1]{mcl} or \cite[Theorem 12.3]{ahs}) to describe the products and equalizers (checking that they indeed are products and equalizers is a simple task that we omit). This is carried out in \cite[Propositions 4.11 and 4.13]{2412.20999v1} for \emph{complete} operator spaces; we remind the reader of the constructions.
  
  \begin{enumerate}[(I), wide]
  \item \textbf{: Products.} The construction is standard in the theory of operator algebras: for a family $S_i$, $i\in I$ of operator systems (spaces), define the categorical product $\displaystyle \prod^\infty S_i$ to be the set of {\it bounded} tuples
    \begin{equation*}
      (s_i)_{i\in I}\in \prod_{i\in I}S_i
    \end{equation*}
    in the Cartesian product. For operator spaces `bounded' means the obvious, i.e. having a uniform bound on the norm of all of the $s_i$, whereas for operator systems the phrase means that there is some $r>0$ such that
    \begin{equation*}
      s_i\le re_i,\ \forall i\in I,
    \end{equation*}
    where $e_i\in S_i$ is the order unit. The matrix order and norm in the two cases (operator systems and spaces, respectively) is defined similarly: either way, for every positive integer $n$ we have
    \begin{equation*}
      M_n\otimes \prod^{\infty}_i S_i\cong \prod^{\infty}_iM_n(S_i).
    \end{equation*}
    A tuple therein is positive (or has norm at most $1$) if and only if all of its components have the same property in the original operator system (space) structures of the $S_i$.

  \item \textbf{: Equalizers.} Here things are even simpler: they are defined as for maps of vector spaces, with the operator system (space) structure inherited from that of the larger ambient space.

    The limit-preservation claim made of $U$ is now clear from these descriptions: products or equalizers are preserved because the constructions coincide, and hence arbitrary limits are preserved by \cite[Proposition 13.4]{ahs}.
  \end{enumerate}
  Given the continuity of $U$, Freyd's {\it adjoint functor theorem} (\cite[\S V.6, Theorem 2]{mcl}) reduces the adjunction claim to the {\it solution-set condition}: showing that for every operator space $X$, there is a set of completely contractive maps $\varphi_i:X\to S_i$ into operator systems $S_i$ such that every completely contractive map $\varphi:X\to S$, $S\in \cat{OSys}$, factors
  \begin{equation*}
    \begin{tikzpicture}[auto,baseline=(current  bounding  box.center)]
      \path[anchor=base] 
      (0,0) node (l) {$X$}
      +(2,.5) node (u) {$S_i$}
      +(4,0) node (r) {$S$}
      ;
      \draw[->] (l) to[bend left=6] node[pos=.5,auto] {$\scriptstyle \varphi_i$} (u);
      \draw[->] (u) to[bend left=6] node[pos=.5,auto] {$\scriptstyle \psi$} (r);
      \draw[->] (l) to[bend right=6] node[pos=.5,auto,swap] {$\scriptstyle \varphi$} (r);
    \end{tikzpicture}
  \end{equation*}
  for some $i$ and unital completely positive map $\psi$. This can be seen by simply taking for the $\varphi_i:X\to S_i$ all completely contractive maps into operator systems of (vector-space) dimension no larger than
  \begin{equation*}
    \max(\dim X,\ \aleph_0).
  \end{equation*}
  Every $\varphi:X\to S$ factors through the smallest $*$-subspace of $S$ containing $\varphi(X)$ and $e$, and that subspace has an operator system structure inherited from $S$.

  Since the forgetful functor $U$ is in fact \emph{full} (i.e. induces surjections on morphisms as well as injections) by \cite[Proposition 3.5]{pls-bk}, the \emph{reflectivity} \cite[Definition 3.5.2]{brcx_hndbk-1} claim is simply a rephrasing of $U$'s being a right adjoint.
\end{proof}

\begin{remark}\label{re:zhng.ladj}
  The existence of a left adjoint to $U$ is also effectively what \cite[Theorem 7]{MR1354984} proves, by very different means.
\end{remark}

In light of the distinction drawn in \Cref{not:osos} between categories of complete and incomplete objects, as well as the role this plays in \cite{2412.20999v1} on the \emph{local presentability} \cite[Definition 1.17]{ar} of $\cat{OSp}^{\wedge}$, it is worth calling attention to why the category of normed vector spaces (just like that of Banach spaces) is not locally \emph{finitely} presentable in the sense of \cite[Definition 1.9]{ar}. We recall some of the relevant background for the reader's benefit, denoting by $\kappa$ a \emph{regular cardinal} \cite[Definition I.10.34]{kun_st_2nd_1983}.

\begin{itemize}[wide]
  
\item A partially ordered set is \emph{$\kappa$-directed} \cite[Definition 1.13]{ar} if each subset of cardinality strictly less than $\kappa$ has an upper bound.

\item An object $c\in \cC$ of a category $\cC$ is \emph{$\kappa$-presentable} if its associated representable functor $\cC(c,-)$ preserves $\kappa$-directed colimits. The object $c$ is \emph{presentable} if $\kappa$-presentable for some regular $\kappa$.

\item $\cC$ is \emph{locally $\kappa$-presentable} \cite[Definition 1.17]{ar} if it is \emph{cocomplete} (i.e. \cite[Definition 12.2]{ahs} has colimits for all small-domain functors $\cD\to \cC$) and has a set $S$ of $\kappa$-presentable objects with every object a $\kappa$-directed colimit of objects in $S$. The category $\cC$ is \emph{locally presentable} if it is locally $\kappa$-presentable for some regular $\kappa$.
\end{itemize}

We remark that a locally $\aleph_0$-presentable category is also referred to as a \emph{locally finitely presentable} category. In addition, a locally $\aleph_1$-presentable category is called a \emph{locally countably presentable} category. Similar terminology is also used for local presentability of objects.

For our purposes, \Cref{th:osos.fgt} will provide the local $\aleph_1$-presentability noted in the following result.

\begin{corollary}\label{cor:osys.al1.pres}
  The categories $\cat{OSys}$ and $\cat{OSp}$ are locally $\aleph_1$-presentable. 
\end{corollary}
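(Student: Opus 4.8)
The plan is to deduce both assertions from the criterion, due to Adámek and Rosický \cite{ar}, that a complete $\aleph_1$-accessible category is automatically locally $\aleph_1$-presentable (completeness forces local presentability for an accessible category, and $\aleph_1$-accessibility pins the rank to $\aleph_1$). Completeness of $\cat{OSp}$ and $\cat{OSys}$ is already supplied by \Cref{th:osos.fgt}, so the whole burden falls on establishing $\aleph_1$-accessibility, namely exhibiting a set $\cG$ of $\aleph_1$-presentable objects such that every object is an $\aleph_1$-directed colimit of objects drawn from $\cG$. I would first carry this out for $\cat{OSp}$ and then transfer the conclusion to $\cat{OSys}$ through the reflective inclusion of \Cref{th:osos.fgt}.

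For $\cat{OSp}$, the natural candidate for $\cG$ is a skeleton of the countably generated (equivalently, separable) operator spaces. Since the underlying vector space of such an object is at most countable-dimensional and the matrix-norm structures on a fixed such space form a set, $\cG$ is genuinely small. Every operator space $X$ is the union of the sub-operator-spaces spanned by its countable subsets, and this family, ordered by inclusion, is $\aleph_1$-directed: a countable collection of countably generated subspaces is contained in their sum, which is again countably generated. Because the inclusions are completely isometric, no completion intervenes and this directed union is a genuine $\aleph_1$-directed colimit in $\cat{OSp}$ with colimit $X$. Thus every object is presented as required, and it remains only to verify that the objects of $\cG$ are $\aleph_1$-presentable.

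This last point is the crux, and I expect it to be the main obstacle. It requires an explicit description of $\aleph_1$-directed colimits in $\cat{OSp}$ — on underlying vector spaces they are the ordinary colimits, with matrix norms taken to be the largest making all coprojections completely contractive — together with the factorization statement that any completely contractive map from a countably generated operator space $S$ into an $\aleph_1$-directed colimit $\varinjlim_i Y_i$ factors through some $Y_i$. The idea is that such a map is determined by the countably many images of a generating set, each represented at some stage, so $\aleph_1$-directedness produces a single stage carrying a linear lift; the delicate part is that complete contractivity, a priori an uncountable family of matrix-norm inequalities, is in fact controlled by countably much data on a separable domain and can therefore be recovered at a single later stage. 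This is precisely the interplay between the matrix-norm structure and filtered colimits that Lindenhovius and Zamdzhiev \cite{2412.20999v1} analyze in the complete setting, and I would adapt their argument to the present, not-necessarily-complete, categories.

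Finally, for $\cat{OSys}$ I would invoke \Cref{th:osos.fgt}, which realizes it as a full reflective subcategory of $\cat{OSp}$. Once $\cat{OSp}$ is known to be locally $\aleph_1$-presentable, it suffices to check that $\cat{OSys}$ is closed in $\cat{OSp}$ under $\aleph_1$-directed colimits — the order unit and the matrix order pass to directed unions, so a directed colimit of operator systems is again an operator system — after which the Adámek--Rosický theorem that a full reflective subcategory of a locally presentable category which is closed under $\lambda$-directed colimits is itself locally $\lambda$-presentable \cite{ar} yields the claim. Alternatively, one may run the accessibility argument above verbatim for $\cat{OSys}$, the only change being that the generating set consists of countably generated operator systems and that the defining conditions (unitality and positivity) are again countably many closed conditions compatible with the colimit computation.
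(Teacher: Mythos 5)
Your overall strategy is sound, and for $\cat{OSys}$ it coincides with the paper's: both arguments realize $\cat{OSys}$ as a full reflective subcategory of $\cat{OSp}$ (via \Cref{th:osos.fgt}) that is closed under $\aleph_1$-directed colimits, and then invoke the Ad\'amek--Rosick\'y transfer theorem \cite[Theorem 1.39]{ar}. Where you genuinely diverge is on $\cat{OSp}$ itself: the paper simply imports the local $\aleph_1$-presentability of $\cat{OSp}^{\wedge}$ from \cite[Theorem 4.35]{2412.20999v1} and argues that the completeness hypothesis is harmless because $\aleph_1$-directed colimits are computed set-theoretically \cite[Proposition 4.30]{2412.20999v1}, whereas you rebuild the $\aleph_1$-accessibility of $\cat{OSp}$ from scratch (generators the countably generated operator spaces, every object their $\aleph_1$-directed union) and then combine with the completeness already proved in \Cref{th:osos.fgt} via ``accessible $+$ complete $\Rightarrow$ locally presentable.'' Your route is more self-contained and arguably more honest about what the non-complete setting requires; the paper's is shorter but leans on the cited source.

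The one place you should be careful is the step you yourself identify as the crux: that a countably generated operator space $S$ is $\aleph_1$-presentable. Your reduction of complete contractivity to ``countably much data'' via a countable dense subset of each unit ball of $M_n(S)$ is not enough on its own: the lift $g:S\to Y_{i_0}$ obtained by lifting a spanning set and extending linearly is a priori only linear, and a discontinuous linear map can perfectly well be contractive on a dense subset of the unit ball (the kernel of an unbounded functional is dense, and its intersection with the open unit ball is dense there). The standard repair is to test the matrix norms on the $\bQ$-span (or $\bQ[i]$-span) of a Hamel basis and use that the lift is automatically continuous on each finite-dimensional subspace, so that control on the countable $\bQ$-rational points propagates to all of $M_n(S)$. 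Relatedly, the adaptation of \cite{2412.20999v1} is not entirely automatic: their arguments may extend a lift from a dense subspace by completeness of the target, which is unavailable here, so the Hamel-basis formulation is the one you actually want. With that step fixed, your proof goes through.
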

\begin{proof}
  From \Cref{th:osos.fgt}, we know that $\cat{OSys}$ is full reflective in the locally $\aleph_1$-presentable \cite[Theorem 4.35]{2412.20999v1} category $\cat{OSp}^{\wedge}$. As mentioned, that source works with \emph{complete} operator spaces; this, however, will not affect the presentability argument. From \cite[Proposition 4.30]{2412.20999v1}, one has that $\aleph_1$-directed colimits are computable set-theoretically (i.e. are preserved by the forgetful functor $\cat{OSp}\to \cat{Set}$). Hence, their completeness is automatic when the spaces involved are complete. 

  The conclusion will now follow from \cite[Theorem 1.39]{ar} together with the routine observation that an $\aleph_1$-directed colimit of operator systems in $\cat{OSp}$ inherits a natural operator-system structure making it into a colimit in $\cat{OSys}$. In short: \Cref{eq:osys2osp} is both reflective and $\aleph_1$-directed-colimit-preserving. 
\end{proof}

We note that the same strategy presented for $\cat{OSys}$ can be applied to the categories of (Archimedean) order unit spaces \cite[\S 2]{kptt}. We adopt a slightly different approach in the following observation. 

\begin{proposition}\label{pr:aou.pres}
  The locally $\aleph_1$-presentable category $\cat{AOU}$ is full reflective in the locally $\aleph_0$-presentable category $\cat{OU}$.
\end{proposition}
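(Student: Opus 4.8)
The statement packages three claims---that $\cat{AOU}\hookrightarrow\cat{OU}$ is full and reflective, that $\cat{OU}$ is locally $\aleph_0$-presentable, and that $\cat{AOU}$ is locally $\aleph_1$-presentable---and the plan is to prove them in that order, obtaining the third from the first two.

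For reflectivity, fullness is immediate: morphisms in either category are the positive unital maps, so the inclusion is fully faithful. For the left adjoint I would use the Archimedeanization of \cite{zbMATH05592601}: given $(V,V^+,e)$, form the Archimedean closure $\overline{V^+}=\{v:v+\epsilon e\in V^+\text{ for all }\epsilon>0\}$, put $N=\overline{V^+}\cap(-\overline{V^+})$, and set $V^{\mathrm a}=V/N$ with cone the image of $\overline{V^+}$ and unit the image of $e$. The work is to check that $V^{\mathrm a}\in\cat{AOU}$ and that every positive unital map from $V$ into an Archimedean space annihilates $N$ and respects the closure, hence factors uniquely through $V\to V^{\mathrm a}$; this universal property is exactly reflectivity of $\cat{AOU}$ in $\cat{OU}$.

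The heart of the matter is local $\aleph_0$-presentability of $\cat{OU}$, which I would obtain by exhibiting $\cat{OU}$ as a finitary quasivariety and invoking the fact that such categories are locally finitely presentable \cite{ar}. The naive finite-limit sketch---one sort $V$ with the (finitary) $\mathbb R$-vector-space operations, a subobject $P\rightarrowtail V$ for a proper cone, and a constant $e$---fails precisely at the order-unit axiom $\forall v\,\exists r>0:-re\le v\le re$, which is genuinely existential; symptomatically, $\cat{OU}$ is not closed under products in this ambient category, its products being the bounded tuples exactly as for $\cat{OSys}$ in \Cref{th:osos.fgt}. I would instead encode an order unit space through its order unit ball: the symmetric interval $B=[-e,e]$ carries finitary operations (finite absolutely convex combinations, the involution $v\mapsto-v$, the distinguished positive part $[0,e]\subseteq B$, and the unit $e$), and the space is recovered functorially as $V=\bigcup_n nB$. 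This is the order-theoretic analogue of the classical presentation of (semi)normed spaces as finitely-convex / absolutely convex modules, and it builds absorbency into the reconstruction, so the order-unit axiom disappears. I expect this identification---verifying that the finitary ball operations recover exactly the (non-Archimedean) order unit spaces, with properness appearing as a Horn condition that makes the class a quasivariety rather than a variety, and crucially that no completeness or closure is smuggled in---to be the main obstacle.

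Granting the previous two points, $\cat{AOU}$ is handled by the strategy already used for $\cat{OSys}$ in \Cref{cor:osys.al1.pres}. It is reflective in $\cat{OU}$ by the first step, and it is closed under $\aleph_1$-directed colimits there: such colimits are computed set-theoretically, so the countably many relations $v+\tfrac1n e\ge0$ furnishing an Archimedean hypothesis are all realized at a single stage, where Archimedeanness applies and then passes to the colimit. Since $\cat{OU}$ is locally $\aleph_0$-, hence locally $\aleph_1$-presentable, \cite[Theorem 1.39]{ar} yields that $\cat{AOU}$ is locally $\aleph_1$-presentable. The same argument explains the drop to $\aleph_1$: at the level of ordinary filtered colimits the countably many constraints need not localize to one stage, so $\cat{AOU}$ is not closed under filtered colimits and is not locally finitely presentable.
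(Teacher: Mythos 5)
Your first and third steps line up with the paper's own proof: the reflection is exactly the Archimedeanization of \cite[Theorem 2.38]{zbMATH05592601}, and the passage from local $\aleph_0$-presentability of $\cat{OU}$ to local $\aleph_1$-presentability of $\cat{AOU}$ goes through closure of $\cat{AOU}$ under the set-theoretically computed $\aleph_1$-directed colimits (your observation that the countably many Archimedean constraints localize at a single stage is the right reason). Your diagnosis of why a naive finitary presentation of $\cat{OU}$ on the underlying vector space must fail---the genuinely existential order-unit axiom, visible in the fact that products are bounded tuples---is also correct.

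The gap is the middle step, which is the heart of the proposition and which you yourself defer as ``the main obstacle.'' Realizing $\cat{OU}$ as a finitary quasivariety of order-unit balls requires a representation theorem: you must show that the abstract interval structures you axiomatize are, up to isomorphism, exactly the intervals $[-e,e]$ of order unit spaces, and in particular that this class is closed under products and subalgebras inside the ambient variety. Subalgebra closure is the dangerous point: a subset of $[-e,e]$ containing $e$ and closed under negation and finite absolutely convex combinations need not be the order interval of a subspace with the induced order, so one must argue it is at least abstractly isomorphic to some order interval. The precedent you invoke should give you pause rather than comfort: the ball presentation of $\cat{Norm}$ by finitely absolutely convex modules yields a full embedding into a variety whose essential image is \emph{not} a quasivariety, and indeed $\cat{Norm}$ has no non-zero finitely presentable objects by \Cref{pr:norm.not.fp}, so it cannot be a finitary quasivariety. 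The paper sidesteps all of this by going in the opposite direction: instead of shrinking to the ball, it enlarges the ambient category, exhibiting $\cat{OU}$ as a full reflective, colimit-closed subcategory of the essentially algebraic (hence locally $\aleph_0$-presentable) category $\cat{Vect}_{\le,1}$ of ordered vector spaces with a distinguished element $1\ge 0$, where no order-unit axiom is imposed. Either carry out the representation theorem for abstract intervals in full, or switch to the paper's ambient-category argument; as written, the local $\aleph_0$-presentability of $\cat{OU}$ is asserted rather than proved.
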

\begin{proof}
  The full reflectivity claim is effectively \cite[Theorem 2.38]{zbMATH05592601}. The embedding functor preserves $\aleph_1$-directed colimits, which can again be computed set-theoretically. Thus, the assertion of $\aleph_1$-presentability will follow \cite[\S 2.78, Remark]{ar} from the local $\aleph_0$-presentability of $\cat{OU}$.

  To verify the latter, we observe that
  \begin{itemize}[wide]
  \item The category $\cat{OU}$ is full reflective in the category $\cat{Vect}_{\le,1}$ of ordered vector spaces with a distinguished element $1\ge 0$.

  \item The category $\cat{Vect}_{\le,1}$ is locally $\aleph_0$-presentable because it is realizable as an \emph{essentially algebraic theory} \cite[Definition 3.34]{ar}. Indeed, this follows in a similar fashion as for vector spaces \cite[Example 3.5(2)]{ar} and partially ordered sets \cite[Example 3.35(4)]{ar}.

  \item Finally, the inclusion functor $\cat{OU}\subseteq \cat{Vect}_{\le 1}$ preserves arbitrary colimits (as does the forgetful functor to ordered vector spaces). 
  \end{itemize}
\end{proof}


On the other hand, for some of the norm-flavored categories mentioned above, the subsequent \Cref{pr:norm.not.fp} negates the aforementioned local $\aleph_0$-presentability in a strong fashion.

\begin{proposition}\label{pr:norm.not.fp}
  In any of the categories $\cat{OSp}$,  $\cat{Norm}$ or $\cat{Ban}$ the only finitely-presentable object is $\{0\}$.
\end{proposition}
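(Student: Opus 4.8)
The plan is to prove two things: that the zero space $\{0\}$ \emph{is} finitely presentable, and that no nonzero object is. The first is immediate, since $\{0\}$ is a zero object and the representable $\cC(\{0\},-)$ is naturally isomorphic to the constant functor at the one-point set, which preserves all colimits over connected (in particular, filtered) index categories. For the converse I would fix a single test diagram that collapses to zero. Letting $\cC$ denote any one of $\cat{OSp}$, $\cat{Norm}$, $\cat{Ban}$, consider the $\bN$-indexed chain
\[
  \bC \xrightarrow{\ \cdot\tfrac12\ } \bC \xrightarrow{\ \cdot\tfrac12\ } \bC \xrightarrow{\ \cdot\tfrac12\ } \cdots,
\]
where every object is $\bC$ (carrying its standard norm, i.e.\ its $M_1$ operator-space structure in the $\cat{OSp}$ case) and every transition map is multiplication by $\tfrac12$, which is a (completely) contractive linear map. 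Since $\bN$ is directed, this is an $\aleph_0$-directed diagram.

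The crux is computing its colimit $C$ in $\cC$. On underlying vector spaces the diagram is a chain of isomorphisms, so the colimit vector space is $\bC$; but the colimit seminorm of the class of $z$ sitting in the $k$-th copy is $\inf_{m\ge k}|z|/2^{\,m-k}=0$, so the seminorm collapses identically. Equivalently, a cocone $(g_k\colon\bC\to Y)_k$ satisfies $g_k(z)=g_{k+m}(z/2^m)$, whence $\|g_k(z)\|\le|z|/2^m$ for all $m$ and thus $g_k=0$; the only cocone is the zero cocone. Hence $C=\{0\}$ in $\cat{Norm}$, in $\cat{Ban}$ (the completion of $\{0\}$), and in $\cat{OSp}$ (the level-one matrix seminorm already collapses, forcing the whole object to vanish). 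This is the conceptual heart of the argument, and the step I expect to require the most care: $\aleph_0$-directed colimits in these categories are emphatically \emph{not} computed set-theoretically --- in contrast to the $\aleph_1$-directed situation exploited in \Cref{cor:osys.al1.pres} --- and it is precisely this norm collapse that will obstruct finite presentability.

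Finally I would compare the two sides of the presentability condition for a nonzero $X\in\cC$. On one hand $\cC(X,C)=\cC(X,\{0\})$ is a single point. On the other hand $\cC(X,\bC)$ is the closed unit ball of the dual $X^*$ --- for $\cat{OSp}$ one invokes that a scalar functional satisfies $\|\varphi\|_{cb}=\|\varphi\|$, so complete contractivity reduces to contractivity --- and the transition maps induced on these hom-sets are again $\varphi\mapsto\varphi/2$. Since $X\neq\{0\}$, Hahn--Banach supplies a norm-one functional $\varphi_0\in X^*$; in the filtered colimit $\mathrm{colim}_k\,\cC(X,\bC)$ two representatives are identified only if they already agree at a finite stage, and $\varphi_0/2^m\neq 0$ for every $m$, so the class of $\varphi_0$ is distinct from the class of $0$. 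Thus $\mathrm{colim}_k\,\cC(X,\bC)$ has at least two elements while $\cC(X,C)$ has exactly one, so $\cC(X,-)$ fails to preserve this $\aleph_0$-directed colimit and $X$ is not finitely presentable. Together with the first paragraph, this shows $\{0\}$ is the only finitely-presentable object in each of $\cat{OSp}$, $\cat{Norm}$, and $\cat{Ban}$.
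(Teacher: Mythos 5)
Your argument is correct, but it takes a genuinely different route from the paper's. The paper first reduces to the ground field: Hahn--Banach exhibits $\bR$ as a split quotient (retract) of any non-zero object, and finitely presentable objects are closed under retracts, so it suffices to show $\bR$ is not finitely presentable; this is then done by identifying $\cC(\bR,-)$ with the unit-ball functor and producing a directed colimit (quotients $E/E_n$ by an increasing chain of proper dense subspaces) that collapses to $\{0\}$ in $\cC$ but not in $\cat{Set}$. You instead use a single universal test diagram --- the chain of copies of $\bC$ with transition maps $\cdot\tfrac12$, whose colimit is $\{0\}$ by the two-line cocone estimate $\|g_k(z)\|\le |z|/2^m$ --- and then, for an arbitrary non-zero $X$, use Hahn--Banach only to produce one norm-one functional whose class survives in $\varinjlim_k\cC(X,\bC)$ while $\cC(X,\{0\})$ is a point. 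Your version is more self-contained: it avoids the retract-closure citation and the dense-subspace construction, works uniformly for all non-zero $X$ at once, and the injectivity of the transition maps on hom-sets makes the set-level colimit computation immediate. What the paper's version buys is the reusable identification of $\cC(\bR,-)$ with the unit-ball functor (so the statement becomes ``the unit ball does not commute with directed colimits''), and a counterexample that pinpoints the role of non-closed dense subspaces. The one point in your write-up deserving explicit justification is the identification of $\cat{OSp}(X,\bC)$ with the unit ball of $X^*$, i.e.\ that scalar functionals on an operator space satisfy $\|\varphi\|_{cb}=\|\varphi\|$; you do flag this, and it is standard (e.g.\ \cite[Corollary 2.2.3]{er_os_2000}), so there is no gap.
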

\begin{proof}
  We focus on the real case to fix ideas. In all cases, there is a \emph{Hahn-Banach extension} theorem ensuring that any non-zero object $E$ admits $\bR$ as a \emph{split quotient}:
  \begin{equation*}
    \begin{tikzpicture}[>=stealth,auto,baseline=(current  bounding  box.center)]
      \path[anchor=base] 
      (0,0) node (l) {$\bR$}
      +(2,.5) node (u) {$E$}
      +(4,0) node (r) {$\bR$,}
      ;
      \draw[->] (l) to[bend left=6] node[pos=.5,auto] {$\scriptstyle \text{morphism}$} (u);
      \draw[->] (u) to[bend left=6] node[pos=.5,auto] {$\scriptstyle \text{morphism}$} (r);
      \draw[->] (l) to[bend right=6] node[pos=.5,auto,swap] {$\scriptstyle \id$} (r);
    \end{tikzpicture}
  \end{equation*}
  by \cite[Theorem 4.1.5]{er_os_2000} for $\cat{OSp}$ and \cite[Theorem III.6.2]{conw_fa} for the classical $\cat{Norm}$/$\cat{Ban}$ version. It follows from \cite[Remark following Proposition 1.16]{ar} that, if any $E\ne \{0\}$ were finitely-presentable, then $\bR$ would be as well. However, $\bR$ is not finitely-presentable. Indeed, for any of our categories $\cC$ we have
  \begin{equation*}
    \cC(\bR,-)
    \cong
    \left(\cC\xrightarrow{\quad\cat{unit ball}\quad}\cat{Set}\right),
  \end{equation*}
  so the finite presentability of $\bR$ is equivalent to the claim that the unit-ball functor preserves directed colimits. That the latter statement fails is because, for a chain of embeddings
  \begin{equation*}
    E_1\le E_2\le \cdots \le E
  \end{equation*}
  with proper dense image (dense in whatever sense appropriate to $\cC$), the colimit of $E/E_n$ is $\{0\}$ in $\cC$, yet not in $\cat{Set}$.
\end{proof}

\begin{remark}
  Cf. \cite[Proposition 4.26]{2412.20999v1}, proving the analogue for the category of \emph{complete} operator spaces. That argument does employ completeness, so does not apply directly here.
\end{remark}

\Cref{pr:norm.not.fp} (or rather its proof) has a bearing on other ways in which the categories mentioned there are not as well-behaved as they might appear. For this, we require a brief synopsis of some relevant notions:
\begin{itemize}[wide]
  
\item An \emph{epimorphism} (or \emph{epic} morphism) \cite[Definition 7.39]{ahs} $c\to c'$ in a category $\cC$ is one for which the corresponding natural transformation $\cC(c',-)\to \cC(c,-)$ is pointwise injective. 
  
\item A \emph{regular epimorphism} \cite[\S 0.5]{ar} $x\to y$ in a category is the coequalizer of a pair $\bullet\xrightarrow{f,g}x$ (as the name suggests, such morphisms are automatically epic).

\item A \emph{(regular) projective} \cite[Remark 3.4(5)]{ar} is an object $p$ with the respective lifting property
  \begin{equation}\label{eq:proj.fact}
    \begin{tikzpicture}[>=stealth,auto,baseline=(current  bounding  box.center)]
      \path[anchor=base] 
      (0,0) node (l) {$x$}
      +(2,.5) node (u) {$p$}
      +(4,0) node (r) {$y$.}
      ;
      \draw[dashed,->] (u) to[bend right=6] node[pos=.5,auto,swap] {$\scriptstyle \exists$} (l);
      \draw[->] (u) to[bend left=6] node[pos=.5,auto] {$\scriptstyle \forall$} (r);
      \draw[->>] (l) to[bend right=6] node[pos=.5,auto,swap] {$\scriptstyle \forall\text{ (regular) epic}$} (r);
    \end{tikzpicture}
  \end{equation}

\item A (small, full) subcategory $\cD\subseteq \cC$ is \emph{dense} if every object $c\in \cC$ is the canonical colimit of its $\cD$-based \emph{canonical diagram} \cite[\S 0.4]{ar}:
  \begin{equation*}
    c\cong \varinjlim_{\text{morphisms }\cD\ni d\to c}d.
  \end{equation*}
\end{itemize}

As a follow-up to \Cref{pr:norm.not.fp}, we note the dearth of regular projectives in the norm-flavored categories discussed above. One should compare our conclusion with that of \cite[Proposition 2]{MR241955}, on the absence of (non-zero) projectives in $\cat{Ban}$. 

\begin{proposition}\label{pr:no.reg.proj.norm}
  The categories $\cat{Norm}$, $\cat{Ban}$ and $\cat{OSp}$ have no non-zero regular projectives. 
\end{proposition}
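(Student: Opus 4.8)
The plan is to collapse the statement onto a single object --- the base field $\bF$ --- and to show it is not regular projective by exhibiting a single regular epimorphism it cannot lift against. First I would record the elementary closure fact that a retract of a regular projective is again regular projective: if $p$ is regular projective with section $s\colon r\to p$ and retraction $\pi\colon p\to r$, $\pi s=\id_r$, then given a regular epic $e\colon x\twoheadrightarrow y$ and a morphism $g\colon r\to y$ one lifts $g\pi\colon p\to y$ through $p$ to some $h\colon p\to x$ with $eh=g\pi$, and $hs\colon r\to x$ satisfies $e(hs)=g\pi s=g$. Combined with the Hahn--Banach argument already used in the proof of \Cref{pr:norm.not.fp} --- which exhibits $\bF$ as a split quotient, hence a retract, of every non-zero object of $\cat{Norm}$, $\cat{Ban}$ and $\cat{OSp}$ --- this reduces the problem to showing that $\bF$ itself is \emph{not} regular projective.

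For that I would produce a regular epimorphism onto $\bF$ whose quotient norm fails to be attained at the distinguished generator. Concretely, take $\ell^1=\ell^1(\bN)$ and the functional
\[
  \phi\colon \ell^1\to\bF,\qquad \phi\big((a_n)_n\big)=\sum_{n\ge 1}\Big(1-\tfrac1n\Big)a_n,
\]
which is surjective and has norm $\sup_n\big(1-\tfrac1n\big)=1$, \emph{not} attained. Since $\|\phi\|=1$, the induced map $\ell^1/\ker\phi\to\bF$ is an isometric isomorphism, so $\phi$ is a metric quotient map, hence the coequalizer of $\ker\phi\hookrightarrow\ell^1$ and $0$, i.e.\ a regular epimorphism in $\cat{Norm}$ and $\cat{Ban}$. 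For $\cat{OSp}$ I would endow $\ell^1$ with its maximal operator space structure $\mathrm{MAX}(\ell^1)$, so that $\phi$ is automatically a complete contraction; because the target is $\bF$, the adjoint $\phi^{*}\colon\bF\to\mathrm{MAX}(\ell^1)^{*}$ sends $1$ to a norm-one functional and is therefore a complete isometry (a norm-one vector always induces a completely isometric copy of $\bF$). Operator space duality then makes $\phi$ a complete quotient map, i.e.\ a regular epimorphism in $\cat{OSp}$.

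Finally I would observe that $\id\colon\bF\to\bF$ admits no lift along $\phi$. Such a lift would be a morphism $\bF\to\ell^1$, i.e.\ an element $x=(a_n)\in\ell^1$ with $\|x\|\le 1$ (the $\mathrm{MAX}$-norm agrees with the $\ell^1$-norm at the ground level) and $\phi(x)=1$; but then
\[
  1=|\phi(x)|\le\sum_n\Big(1-\tfrac1n\Big)|a_n|\le\sum_n|a_n|\le 1,
\]
forcing $\sum_n\tfrac1n|a_n|=0$, hence $x=0$, a contradiction. Thus $\bF$ is not regular projective, and by the first paragraph neither is any non-zero object; as $\{0\}$ is trivially regular projective, it is the only one. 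I expect the one genuinely delicate point to be confirming that $\phi$ really is a regular epimorphism in $\cat{OSp}$ --- that is, the identification of regular epimorphisms with complete quotient maps together with the duality step above --- while the $\cat{Norm}$/$\cat{Ban}$ cases and the non-lifting estimate are routine.
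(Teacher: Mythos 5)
Your proof is correct, and its skeleton coincides with the paper's: reduce to the ground field via the retract/split-quotient observation from the proof of \Cref{pr:norm.not.fp} (using that regular projectives are closed under retracts), then exhibit a regular epimorphism onto the ground field that does not split because the defining functional fails to attain its norm on the unit ball. The difference is in how the non-attaining functional is produced and how the operator-space case is handled. The paper invokes James' theorem to get a norm-non-attaining functional on \emph{every} non-reflexive Banach space, whereas you write down the explicit functional $\phi((a_n))=\sum_n(1-\tfrac1n)a_n$ on $\ell^1$; your version is more elementary and self-contained (James' theorem is genuine overkill when a single example suffices), while the paper's shows the obstruction is ubiquitous. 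For $\cat{OSp}$, the paper simply pushes the example through the left adjoint $\cat{max}$ of the forgetful functor $\cat{OSp}\to\cat{Norm}$ --- a left adjoint preserves coequalizers, hence regular epimorphisms, and a splitting in $\cat{OSp}$ would forget to one in $\cat{Norm}$ --- which is a slicker route than your hands-on duality verification that $\phi$ is a complete quotient map out of $\mathrm{MAX}(\ell^1)$. Your duality step is nonetheless sound (and, since the target is one-dimensional, can be checked directly: $\|a\otimes x\|_{M_n(\mathrm{MAX}(\ell^1))}=\|a\|\,\|x\|_{\ell^1}$, so matrices of norm $<1$ over $\bF$ lift to matrices of norm $<1$), and your identification of $\phi$ as the coequalizer of $\ker\phi\hookrightarrow\ell^1$ and $0$ is exactly what is needed to certify it as a regular epimorphism.
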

\begin{proof}
  Again, we will assume that the ground field is $\bR$. In the proof of \Cref{pr:norm.not.fp}, we observed that the ground field is a split subobject of any non-zero object. The class of regular projectives being closed under split subobjects \cite[(dual to) Proposition 4.3]{ar}, $\bR$ must be one such if any are non-zero.

  A norm-1 functional $E\xrightarrow{f} \bR$ on a Banach space $E$ will produce a regular epimorphism $E\xrightarrowdbl{}E/\ker f\cong \bR$, which does not split provided $f$ does not attain its supremum on the unit ball of $E$. That this happens for \emph{some} $f$ whenever $E$ is non-reflexive \cite[Definition III.11.2]{conw_fa} is a celebrated theorem of James \cite[p.12, Theorem 3]{zbMATH03481242}.

  This type of example serves to handle both $\cat{Norm}$ and $\cat{Ban}$, while $\cat{OSp}$ follows by applying said example to the left adjoint \cite[(3.3.2) and (3.3.9)]{er_os_2000} of the forgetful functor $\cat{OSp}\to \cat{Norm}$. 
\end{proof}

\begin{remarks}\label{res:no.proj}
  \begin{enumerate}[(1),wide]
  \item\label{item:res:no.proj:semadeni.projectivity} In \cite[p.430, 2$^{nd}$ table, row 3]{sem_ban-sp-cont}, a somewhat different notion of projectivity is employed, which does allow for a broader class of such objects in $\cat{Ban}$. Therein, lifts \Cref{eq:proj.fact} are required to exist only for those horizontal $\cat{Ban}$-morphisms which restrict to surjections on unit balls. Naturally, that form of surjectivity is precisely what fails in the non-reflexive construction that \Cref{pr:no.reg.proj.norm} relies on.

  \item\label{item:res:no.proj:horn} As a consequence of \cite[Theorem 3.33(i) and Remark 5.13(3)]{ar}, the locally $\aleph_1$-presentable categories of \Cref{pr:no.reg.proj.norm} cannot be axiomatized as the \emph{(infinitary) quasi-varieties} of \cite[\S 3.C]{ar} or the \emph{universal Horn theories} of \cite[\S 5.A and Example 5.27(3)]{ar}. 
  \end{enumerate}  
\end{remarks}

\subsection{Convexity and function systems}\label{subse:fn.sys}

Within this subsection, we consider a chain of subcategories
\begin{equation}\label{eq:smplx.chn}
  \cat{OSys}_{bs}^{\wedge}
  \subset
  \cat{OSys}_{s}^{\wedge}
  \subset
  \cat{OSys}_{c}^{\wedge}
  \subset
  \cat{OSys}^{\wedge}
  \subset
  \cat{OSys},
\end{equation}
whose notation we shall now elaborate upon.

\begin{itemize}[wide]
\item The `c' subscript indicates the subcategory consisting of (complete) operator systems that may be embedded within a commutative $C^*$-algebra. In other words, those $S\in \cat{OSys}^{\wedge}$ for which there is a compact Hausdorff space $X$ and a unital complete order isomorphism $S\hookrightarrow\mathrm{C}(X)$.

  A theorem of Kadison \cite[Theorem II.1.8]{alfsen2012compact} asserts that $\cat{OSys}^{\wedge}_{c}$ is contravariantly equivalent to the category of compact convex subsets of locally convex topological vector spaces, with continuous affine maps as their morphisms. The equivalence is implemented by considering the space of continuous affine functions $\mathrm{A}(K)$ for a compact convex set $K$. The corresponding inverse is the compact convex set $S^*_+$ given by positivity-preserving functionals for an object $S$ in $\cat{OSys}^{\wedge}_{c}$.

\item The embedding $\cat{OSys}_{s}^{\wedge}\subset
  \cat{OSys}_{c}^{\wedge}$ in \Cref{eq:smplx.chn} is the embedding dual to
  \begin{equation*}
    \text{\emph{(Choquet) simplices} \cite[\S II.3, p.84]{alfsen2012compact}}
    \subset
    \text{compact convex spaces}.
  \end{equation*}

\item The $\cat{OSys}_{bs}^{\wedge}\subset\cat{OSys}_{s}^{\wedge}$ embedding is that dual to
  \begin{equation*}
    \text{\emph{Bauer simplices} \cite[Theorem II.4.1]{alfsen2012compact}}
    \subset
    \text{simplices},
  \end{equation*}
  with the `Bauer' modifier referring (following \cite{zbMATH03159688}) to those simplices whose sets of extreme points are closed. We also refer to \cite[\S II.4]{alfsen2012compact} and \cite[\S 5, post Example 5.8]{2412.09455v1} for background information on these notions. The Bauer simplices are precisely the compact convex sets of the form
  \begin{equation*}
    \mathrm{Prob}(X)
    :=
    \left\{\text{probability measures on $X$}\right\}
    ,\quad
    \text{compact Hausdorff $X$}. 
  \end{equation*}  
\end{itemize}Consequently, the Bauer simplices are those compact convex sets for which the space of affine functions $\mathrm{A}(K)$ is completely order isomorphic to a commutative $C^*$-algebra.


\begin{theorem}\label{th:osfcn.fgt}
The following statements hold.
  \begin{enumerate}[(1),wide]
  \item\label{item:th:osfcn.fgt:c} The category $\cat{OSys}^{\wedge}_{c}$ is complete and the inclusion functor $U_c: \cat{OSys}^{\wedge}_{c}\rightarrow \cat{OSys}$ is a right adjoint, so that $\cat{OSys}^{\wedge}_{c}$ is full reflective.

  \item\label{item:th:osfcn.fgt:s} The category $\cat{OSys}^{\wedge}_{s}$ has products and the inclusion functor $U_s: \cat{OSys}^{\wedge}_{s}\rightarrow \cat{OSys}^{\wedge}_{c}$ preserves them.
  \end{enumerate}  
\end{theorem}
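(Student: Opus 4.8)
The plan is to handle the two parts in turn: part~(1) by transplanting the argument of \Cref{th:osos.fgt}, and part~(2) by a lattice-theoretic closure property. For part~(1), I would first check that $\cat{OSys}^{\wedge}_{c}$ is closed inside $\cat{OSys}$ under the product and equalizer constructions recalled in the proof of \Cref{th:osos.fgt}. If $S_i\hookrightarrow\mathrm{C}(X_i)$ are complete order embeddings, then the bounded product $\prod^\infty_i S_i$ embeds completely order isomorphically into $\prod^\infty_i\mathrm{C}(X_i)$, which is again a commutative unital $C^*$-algebra, and it is complete; likewise an equalizer is a closed operator subsystem of its source, hence again complete and commutatively embeddable. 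Thus these limits, computed exactly as in $\cat{OSys}$, already live in $\cat{OSys}^{\wedge}_{c}$, which is therefore complete with $U_c$ preserving limits. The adjunction then follows from Freyd's adjoint functor theorem once a solution set is exhibited: every ucp map $\varphi\colon X\to S$ with $S\in\cat{OSys}^{\wedge}_{c}$ factors through the closed operator subsystem $S_0\subseteq S$ generated by $\varphi(X)$ and the unit, and $S_0\in\cat{OSys}^{\wedge}_{c}$ has density character at most $\max(\dim X,\aleph_0)$; a set of representatives of such $S_0$, together with all ucp maps into them, is the required solution set. Fullness of the inclusion upgrades ``right adjoint'' to ``full reflective'', exactly as in \Cref{th:osos.fgt}. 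I expect this part to be routine.

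For part~(2) the key reduction is that, $\cat{OSys}^{\wedge}_{s}$ being a \emph{full} subcategory of $\cat{OSys}^{\wedge}_{c}$, it suffices to show that the product $P:=\prod^\infty_i S_i$ of objects $S_i\in\cat{OSys}^{\wedge}_{s}$, formed in $\cat{OSys}^{\wedge}_{c}$ as in part~(1), again lies in $\cat{OSys}^{\wedge}_{s}$; its universal property and preservation by $U_s$ are then automatic. Via the Kadison duality recalled above, an object $S\in\cat{OSys}^{\wedge}_{c}$ lies in $\cat{OSys}^{\wedge}_{s}$ precisely when its state space $S^*_+$ is a simplex, which by the standard order-theoretic characterization of simplices \cite{alfsen2012compact} happens exactly when the function system $S\cong\mathrm{A}(S^*_+)$ enjoys the Riesz interpolation property. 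I would then verify that interpolation is inherited by bounded products: given $a^{(1)},a^{(2)}\le b^{(1)},b^{(2)}$ in $P$, interpolate coordinatewise in each $S_i$ to obtain $c_i$ with $a^{(k)}_i\le c_i\le b^{(l)}_i$, and note that the order-unit-norm estimate $\|c_i\|\le\max(\|a^{(1)}\|,\|b^{(1)}\|)$ is \emph{uniform} in $i$, so that $(c_i)_i$ is a bounded tuple, i.e.\ a genuine element of $P$ interpolating the given data.

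The main obstacle, and the reason for routing through interpolation rather than through duals, is a subtlety I would flag explicitly: the Banach dual of a bounded ($\ell^\infty$-type) product is strictly larger than the $\ell^1$-sum of the duals, so one cannot certify the simplex property by presenting $P^*$ coordinatewise as an $L$-space. The Riesz interpolation characterization bypasses these singular functionals at infinity entirely, and the only quantitative point requiring care is precisely the uniform norm bound on the interpolant that keeps $(c_i)_i$ bounded.
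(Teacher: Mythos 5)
Your proposal is correct and follows essentially the same route as the paper: limits in $\cat{OSys}^{\wedge}_{c}$ are computed as in $\cat{OSys}$ and shown to stay in the subcategory, the adjunction comes from Freyd's adjoint functor theorem with the same solution set, and part~(2) is handled via the Riesz interpolation characterization of simplices \cite[Corollary II.3.11]{alfsen2012compact} with coordinatewise interpolation. Your explicit remark that the interpolant $(c_i)_i$ remains a bounded tuple is a point the paper leaves implicit (there it follows from $z$ being order-sandwiched between bounded elements), but it is the same argument.
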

\begin{proof}
  We first describe products and equalizers. To this end, we use a characterization of simplices in terms of their corresponding space of affine functions \cite[Corollary II.3.11]{alfsen2012compact}. That is, each object in $\cat{OSys}^{\wedge}_{s}$ can be identified as an operator subsystem $S\subset\mathrm{C}(X)$ of continuous functions on a compact Hausdorff space $X$ with the so-called Riesz interpolation property.
  
  \begin{enumerate}[(I), wide]
  \item \textbf{: Products.} Checking that $\cat{OSys}^{\wedge}_{c}$ has products is near immediate. For $\cat{OSys}^{\wedge}_{s}$, suppose that $\{S_i : i\in I\}$ is a family of objects in $\cat{OSys}^{\wedge}_{s}$. Now, it suffices to check that $\prod_i S_i$, which is an object in $\cat{OSys}_{c}$, satisfies the Riesz interpolation property \cite[Corollary II.3.11]{alfsen2012compact}. To this end, assume that $x_1,x_2\leq y_1, y_2$ in $\prod_i S_i$. As each $S_i$ has the Riesz interpolation property, it follows that there are $\{z_i\in S_i : i\in I\}$ such that \[x_1(i),x_2(i) \leq z_i\leq y_1(i), y_2(i), \ \ \ \  i\in I.\]Choosing $z\in\prod_i S_i$ defined by $z(i):=z_i$ guarantees that \[x_1, x_2\leq z\leq y_1, y_2.\] Therefore, $\prod_i S_i$ has the Riesz interpolation property, and belongs to $\cat{OSys}^{\wedge}_{s}$ by \cite[Corollary II.3.11]{alfsen2012compact}.
    
  \item \textbf{: Equalizers.} The objects in $\cat{OSys}_c^\wedge$ are defined as for maps of vector spaces, and the operator system structure is inherited from that of an ambient commutative $C^*$-algebra.
  \end{enumerate}
  
  At this point, the argument follows through as in \Cref{th:osos.fgt}.
\end{proof}

\begin{remark}\label{re:kad.adj}
  The existence of a left adjoint to $U_c$ is a component of Kadison's duality theorem \cite[Theorem II.1.8.]{alfsen2012compact}, albeit it is accomplished through analytic machinery instead.
\end{remark}

We now note that the analogue of \Cref{th:osfcn.fgt}\Cref{item:th:osfcn.fgt:c} cannot hold for the inclusions
\begin{equation}\label{eq:smplx.incl}
  \cat{OSys}^{\wedge}_{bs}
  \quad\text{or}\quad
  \cat{OSys}^{\wedge}_{s}
  \quad
  \lhook\joinrel\xrightarrow{\quad}
  \quad
  \cat{OSys}^{\wedge}_{c}
  ,\ 
  \cat{OSys}
  ,\ \text{etc.}
\end{equation}

\begin{proposition}\label{pr:smplx.no.eqs}
  The categories $\cat{OSys}^{\wedge}_{\bullet}$, $\bullet\in \{s,bs\}$ do not have equalizers. 
\end{proposition}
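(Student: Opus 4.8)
The plan is to pass through Kadison's duality \cite[Theorem II.1.8]{alfsen2012compact} and reformulate the claim \emph{dually}, as the non-existence of a certain coequalizer. Under the contravariant equivalence $\cat{OSys}^{\wedge}_{c}\simeq \mathbf{CC}^{\mathrm{op}}$, with $\mathbf{CC}$ the category of compact convex sets and continuous affine maps, the subcategories $\cat{OSys}^{\wedge}_{s}$ and $\cat{OSys}^{\wedge}_{bs}$ correspond to the full subcategories of (Choquet) simplices and of Bauer simplices. Because the equivalence is contravariant, an equalizer of a parallel pair in $\cat{OSys}^{\wedge}_{\bullet}$ exists precisely when the dual parallel pair admits a coequalizer in the relevant category of simplices. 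I would therefore exhibit a single parallel pair whose dual coequalizer fails to exist, chosen so that every object in sight is a Bauer simplex, settling both $\bullet\in\{s,bs\}$ at once.

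For the pair I take two states $\mu,\nu\colon \mathrm{C}(\{1,2,3,4\})\rightrightarrows \bC$, namely $\mu=\tfrac12(\delta_1+\delta_2)$ and $\nu=\tfrac12(\delta_3+\delta_4)$; both $\mathrm{C}(\{1,2,3,4\})$ and $\bC$ are Bauer simplex spaces, so this is a legitimate parallel pair in $\cat{OSys}^{\wedge}_{bs}\subset\cat{OSys}^{\wedge}_{s}$. The equalizer computed in the ambient category $\cat{OSys}^{\wedge}_{c}$ (\Cref{th:osfcn.fgt}) is the codimension-one operator subsystem $E=\{a:\ a_1+a_2=a_3+a_4\}\subset \mathrm{C}(\{1,2,3,4\})$, and the first step is to identify its dual compact convex set $K=S(E)$. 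Dualizing, $K$ is the image of the tetrahedron $\Delta_3=\mathrm{Prob}(\{1,2,3,4\})$ under restriction, i.e. the convex hull of the four functionals $p_i=\mathrm{ev}_i|_E$; the defining relation forces $p_1+p_2=p_3+p_4$, so the segments $[p_1,p_2]$ and $[p_3,p_4]$ bisect one another. A short affine-independence check shows the $p_i$ are in convex position and span a plane, whence $K$ is a genuine non-degenerate \emph{parallelogram}. In particular $K$ is not a simplex, so by \cite[Corollary II.3.11]{alfsen2012compact} the object $E$ is not a simplex space.

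The crux is that this observation alone does \emph{not} finish the proof: one must rule out the existence of \emph{any} equalizer in the subcategory. Such an equalizer would factor through $E$ and would represent, on simplex spaces $F$, the functor $F\mapsto\cat{OSys}^{\wedge}_{c}(F,E)$ — that is, it would be a coreflection of $E$ into the subcategory, which dualizes to a \emph{reflection} $\beta\colon K\to P$ of the parallelogram $K$ into (Bauer) simplices. So suppose such a $\beta$ existed. The two coordinate projections $K\to[0,1]$ are affine maps into the Bauer simplex $[0,1]=\mathrm{Prob}(\{0,1\})$ and jointly separate points, so their forced factorizations through $\beta$ make $\beta$ \emph{injective}. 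Next, testing uniqueness of factorizations against the single target $[0,1]$ (e.g. the constant map $\tfrac12$ would admit multiple extensions if $\beta(K)$ left room) forces $\beta(K)$ to affinely span $P$; together with injectivity (so $\dim\beta(K)=2$) this pins $P$ down to a $2$-dimensional simplex, a triangle, in which $\beta(K)$ is an inscribed parallelogram. Finally I would defeat existence of factorizations: choosing $q\in P\setminus\beta(K)$ and an affine functional strictly separating $q$ from the compact set $\beta(K)$, rescaled to map $\beta(K)$ into $[0,1]$, yields an affine map $h\colon K\to[0,1]$ whose only possible affine extension to $P$ takes a value $>1$ at $q$, hence is not a morphism $P\to[0,1]$. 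Thus $h$ does not factor through $\beta$, contradicting universality.

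Since every object used — $\bC$, $\mathrm{C}(\{1,2,3,4\})$, the test interval $[0,1]$, and the forced triangle $\mathrm{Prob}(\{1,2,3\})$ — is a Bauer simplex, the identical argument proves non-existence of the dual coequalizer in both the simplex and the Bauer-simplex categories, establishing the Proposition for $\bullet\in\{s,bs\}$ simultaneously. I expect the main obstacle to be exactly the reflection step: it is tempting, but insufficient, to stop at ``the ambient equalizer is a parallelogram, not a simplex''. The real content is the two-stage rigidity argument (coordinate maps $\Rightarrow$ injectivity; uniqueness $\Rightarrow$ the target is a spanning triangle) followed by the separating-functional construction that defeats every remaining candidate target.
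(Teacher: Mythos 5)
Your proposal is correct, and it is built on exactly the same example as the paper: after dualizing through Kadison's theorem, your parallel pair $\mu,\nu\colon \mathrm{C}(\{1,2,3,4\})\rightrightarrows\bC$ is precisely the paper's pair $\Delta^0\rightrightarrows\Delta^3$ hitting the midpoints of two opposite edges of the tetrahedron, and your parallelogram $K$ is the paper's square $I^2$ arising as the ambient coequalizer. Where you genuinely diverge is in proving the crucial claim that the square admits no reflection into (Bauer or Choquet) simplices. The paper keeps the putative target $\Delta$ completely arbitrary: it deduces that the four closed faces generated by the images of the vertices are mutually disjoint, and then reads the relation $\tfrac{p_0+p_2}{2}=\tfrac{p_1+p_3}{2}$ as two probability measures with disjoint supports and equal resultants, i.e.\ an affine dependence contradicting the simplex property. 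You instead first rigidify the target: injectivity of $\beta$ via the two coordinate projections, then the uniqueness clause of the universal property to force the (closed) affine span of $\beta(K)$ to be all of $P$ --- note that in an infinite-dimensional ambient space this argument really pins down the \emph{closed} affine hull, which suffices since $\beta(K)$ spans a closed $2$-plane --- so that $P$ is a triangle, which you then defeat with an elementary separating functional. Your route uses less Choquet theory (no resultants, no face calculus) at the cost of the extra finite-dimensional reduction; the paper's route is shorter once one is willing to invoke the affine-dependence characterization of simplices directly on an arbitrary compact convex target. Your closing remark is also well taken: the observation that the ambient equalizer is not a simplex space is indeed insufficient on its own, and both proofs must (and do) rule out \emph{every} candidate equalizer in the subcategory, not just the ambient one.
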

\begin{proof}
  We dualize via \cite[Theorem II.1.8]{alfsen2012compact}, working directly with Bauer/Choquet simplices. Denote the $n$-dimensional simplex by $\Delta^n$, and consider two maps $\Delta^0\xrightarrow{f,g} \Delta^3$ sending the singleton to the two midpoints of two opposite edges in a tetrahedron. The coequalizer of the diagram in the category of compact convex spaces is a square:
  \begin{equation*}
    I^2
    \subset \bR^2
    ,\quad
    I:=[0,1]
    ,\quad
    \text{usual convex structure}.
  \end{equation*}
  The latter cannot map \emph{universally} \cite[\S III.1]{mcl_2e} into any (Choquet or Bauer) simplex. Assuming this settled, the conclusion that $f$ and $g$ cannot have a coequalizer in either the category of Bauer or that of Choquet simplices follows; it does remains to verify the claimed non-universality. 

  Assuming otherwise, consider such a universal morphism $I^2\to \Delta$ to a Choquet simplex; it must be an embedding, for $I^2$ embeds into a finite-dimensional simplex; we thus identify $I^2$ with a compact convex subset of $\Delta$. 

  For every three of the four extreme points of $I^2$ there is some compact-convex-space morphism $I^2\to \Delta^2$ mapping said three vertices to the three extreme points of $\Delta^2$. It follows that the \emph{closed faces} \cite[\S 1.1, pre Theorem 1.3]{ae_cvx} $F_i$, $0\le i\le 3$ respectively generated by the four vertices $p_i$ of $I^2$ are mutually disjoint. The relation
  \begin{equation*}
    \frac{p_0+p_2}2 = \frac{p_1+p_3}2
  \end{equation*}
  yields probability measures supported on $F_0\cup F_2$ and $F_1\cup F_3$ with the same \emph{resultant} \cite[p.1, Definition]{phlps_choq}; this is an \emph{affine dependence} \cite[(1.5)]{zbMATH03302675}, violating (one formulation of) the simplex property. 
\end{proof}

As an immediate consequence to \Cref{pr:smplx.no.eqs}, we can also state the following of the inclusions in \Cref{eq:smplx.incl}.

\begin{corollary}\label{cor:smplx.not.refl}
  The full inclusion functors of Equation \Cref{eq:smplx.incl} are not reflective. 
\end{corollary}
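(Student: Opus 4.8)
The plan is to derive the corollary from \Cref{pr:smplx.no.eqs} by invoking the standard fact that a full reflective subcategory inherits every limit that exists in the ambient category. First I would record the relevant lemma: if $i\colon\cD\hookrightarrow\cC$ is a full subcategory with a left adjoint (reflector) $L$, and $\cC$ has equalizers, then so does $\cD$. The underlying mechanism is that the unit $\eta$ of $L\dashv i$ restricts to isomorphisms on objects of $\cD$; given a parallel pair in $\cD$ whose equalizer $e$ is formed in $\cC$, transposing the limit cone across the adjunction forces $\eta_e$ to be invertible, so $e$ already lies in $\cD$ and is the equalizer there. I would simply cite this closure property of reflective subcategories from \cite{brcx_hndbk-1} rather than reprove it.

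Next I would note that each ambient category occurring in \Cref{eq:smplx.incl} has equalizers: $\cat{OSys}$ is complete by \Cref{th:osos.fgt}, and $\cat{OSys}^{\wedge}_{c}$ is complete by \Cref{th:osfcn.fgt}\Cref{item:th:osfcn.fgt:c}. With this in place the corollary is immediate: were the inclusion of $\cat{OSys}^{\wedge}_{\bullet}$, $\bullet\in\{s,bs\}$, into $\cat{OSys}^{\wedge}_{c}$ (or into $\cat{OSys}$) reflective, the lemma would equip $\cat{OSys}^{\wedge}_{\bullet}$ with equalizers, contradicting \Cref{pr:smplx.no.eqs}.

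I do not anticipate a genuine obstacle here, since the statement is flagged as an immediate consequence. The only points meriting care are ensuring that the chosen ambient categories truly possess equalizers — which the completeness results above supply — and observing that the argument needs only the existence of equalizers in the target, not its full completeness. This makes the reasoning apply uniformly to every target appearing in the displayed list of inclusions.
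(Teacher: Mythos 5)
Your argument is correct and matches the paper's own proof: both deduce the corollary from \Cref{pr:smplx.no.eqs} via the standard fact that a full reflective subcategory of a category with the relevant limits inherits them (the paper sketches this using the invertibility of the unit on the limit object, citing \cite{freyd_abcats} rather than \cite{brcx_hndbk-1}). Your observation that only equalizers, not full completeness, of the ambient categories is needed is a harmless minor sharpening.
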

\begin{proof}
  The codomains are complete, and it is a routine exercise to verify that full reflective subcategories of complete categories are closed under limits (and, in particular, remain complete). This is certainly well-known, and sketched in \cite[Exercise 3.F]{freyd_abcats}, say. Denote such a reflection by
  \begin{equation*}
    \begin{tikzpicture}[>=stealth,auto,baseline=(current  bounding  box.center)]
      \path[anchor=base] 
      (0,0) node (l) {$\cC$}
      +(4,0) node (r) {$\cC'$}
      +(2,0) node () {$\bot$}
      ;
      \draw[->] (l) to[bend left=20] node[pos=.5,auto] {$\scriptstyle F:=\cat{reflection}$} (r);
      \draw[right hook->] (r) to[bend left=20] node[pos=.5,auto] {$\scriptstyle \iota$} (l);
    \end{tikzpicture}
  \end{equation*}
  and note that an object $c\in \cC$ belongs to $\cC'\simeq \iota \cD$ precisely when the unit $c\to \iota Fc$ is an isomorphism. This condition holds for the limit
  \begin{equation*}
    \varprojlim \iota T\in \cC
    ,\quad
    \cD\xrightarrow[\quad\text{functor}\quad]{\quad T\quad}\cC',
  \end{equation*}
  which comes equipped with a natural candidate
  \begin{equation*}
    \iota F\left(\varprojlim \iota T\in \cC\right)
    \xrightarrow{\quad}
    \varprojlim \iota T\in \cC
  \end{equation*}
  induced by the universality property of that limit. The conclusion follows from the failure of completeness concluded in \Cref{pr:smplx.no.eqs}. 
\end{proof}




With regard to presentability, the function-system variation to \Cref{cor:osys.al1.pres} becomes a consequence of \Cref{th:osfcn.fgt}.

\begin{theorem}\label{th:osfcn.a1.pres}
  The category $\cat{OSys}^{\wedge}_c$ is locally $\aleph_1$-presentable. 
\end{theorem}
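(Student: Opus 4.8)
The plan is to reproduce, almost verbatim, the template of \Cref{cor:osys.al1.pres}. By \Cref{th:osfcn.fgt}\Cref{item:th:osfcn.fgt:c}, $\cat{OSys}^{\wedge}_c$ is a full reflective subcategory of $\cat{OSys}$, and the latter is locally $\aleph_1$-presentable by \Cref{cor:osys.al1.pres}. According to \cite[Theorem 1.39]{ar} together with the sharpening recorded in \cite[\S 2.78, Remark]{ar}, a full reflective subcategory of a locally $\aleph_1$-presentable category is itself locally $\aleph_1$-presentable as soon as it is closed under $\aleph_1$-directed colimits, i.e.\ as soon as the inclusion $U_c$ preserves them. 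The whole burden of the proof is therefore to verify that $U_c$ is $\aleph_1$-directed-colimit-preserving.

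Recall from the proof of \Cref{cor:osys.al1.pres} that, by \cite[Proposition 4.30]{2412.20999v1}, $\aleph_1$-directed colimits in $\cat{OSys}$ are computed set-theoretically: an element of $S:=\varinjlim_i S_i$ is represented at some finite stage, and a matrix over $S$ is positive precisely when some representative is positive at some stage. So, given an $\aleph_1$-directed diagram $(S_i)$ in $\cat{OSys}^{\wedge}_c$, I must show that its colimit $S$ in $\cat{OSys}$ again lies in $\cat{OSys}^{\wedge}_c$; fullness then makes $S$ the colimit in the subcategory. Two properties must survive passage to the colimit: \emph{completeness} and \emph{complete order embeddability into a commutative $C^*$-algebra}. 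Completeness is handled exactly as in \Cref{cor:osys.al1.pres}: a Cauchy sequence is a countable subset, hence by $\aleph_1$-directedness factors through a single (complete) $S_i$, and therefore converges in $S$.

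The crux is preservation of commutative embeddability, and this is where the cardinal $\aleph_1$ (rather than $\aleph_0$) is essential. I would characterize the objects of $\cat{OSys}^{\wedge}_c$ intrinsically as the complete operator systems carrying the \emph{minimal} operator-system structure compatible with their order \cite{kptt}: concretely, a complete operator system $S$ embeds completely order isomorphically into a commutative $C^*$-algebra iff for every $n$ and every $(s_{kl})\in M_n(S)$ one has
\begin{equation*}
  \Bigl(\textstyle\sum_{k,l}\bar\lambda_k\lambda_l\,s_{kl}\ge 0\ \text{in }S,\ \forall\,\lambda\in\bC^n\Bigr)
  \ \Longrightarrow\
  (s_{kl})\ge 0\ \text{in }M_n(S),
\end{equation*}
the reverse implication being automatic in any operator system. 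This condition is colimit-friendly precisely because it is tested against the scalar ``compressions'' $\sum_{k,l}\bar\lambda_k\lambda_l\,s_{kl}$, each of which is a single element of $S$. Now suppose $(s_{kl})\in M_n(S)$ satisfies the hypothesis. The finitely many entries $s_{kl}$ lift to a matrix $(t_{kl})\in M_n(S_{i_0})$ at one stage. Fix a countable dense subset $D$ of the unit sphere of $\bC^n$; since $\lambda\mapsto\sum_{k,l}\bar\lambda_k\lambda_l\,s_{kl}$ is continuous and positively homogeneous and the cone of a complete operator system is closed, checking positivity for all $\lambda$ reduces to checking it for $\lambda\in D$. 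For each $\lambda\in D$ the positivity of $\sum_{k,l}\bar\lambda_k\lambda_l\,s_{kl}$ in $S$ is witnessed at some stage $j_\lambda\ge i_0$; the countable family $\{j_\lambda:\lambda\in D\}$ admits, by $\aleph_1$-directedness, a common upper bound $j^*$. Writing $(t^*_{kl})$ for the image of $(t_{kl})$ in $M_n(S_{j^*})$, every $\sum_{k,l}\bar\lambda_k\lambda_l\,t^*_{kl}$ with $\lambda\in D$ is positive, hence by closedness of the cone of $S_{j^*}$ positive for all $\lambda\in\bC^n$; as $S_{j^*}\in\cat{OSys}^{\wedge}_c$ satisfies the displayed implication, $(t^*_{kl})\ge 0$ in $M_n(S_{j^*})$, and therefore $(s_{kl})\ge 0$ in $M_n(S)$. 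Thus $S$ satisfies the characterization and lies in $\cat{OSys}^{\wedge}_c$.

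The main obstacle is precisely this last step. A naive ``positivity is detected by states'' argument breaks down, because states on a single $S_j$ need not extend to states on $S$, so one cannot glue witnesses of non-positivity; recasting commutative embeddability as the minimality condition above circumvents this, since it refers only to the intrinsic order of $S$ at the ground level. The reduction to countably many test vectors $\lambda$ is what forces the cardinal $\aleph_1$ and dovetails exactly with the set-theoretic computation of $\aleph_1$-directed colimits and with the completeness argument.
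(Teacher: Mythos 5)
Your proof is correct, and its overall skeleton (full reflectivity via \Cref{th:osfcn.fgt}, plus \cite[Theorem 1.39]{ar}, reducing everything to $U_c$ preserving $\aleph_1$-directed colimits) is exactly the paper's. Where you genuinely diverge is in the key step of showing that the set-theoretically computed colimit $S=\varinjlim S_i$ again embeds into a commutative $C^*$-algebra. The paper dualizes via Kadison's representation theorem, passes to the inverse limit $K=\varprojlim K_\lambda$ of compact convex sets, and reduces the identity $A(K)\cong\varinjlim A(K_\lambda)$ to the corresponding set-theoretic computation of $\aleph_1$-directed colimits of commutative $C^*$-algebras, checking that affineness (an equation of functions on $[0,1]\times K^2\cong\varprojlim([0,1]\times K_\lambda^2)$) survives. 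You instead stay entirely on the operator-system side, characterizing the objects of $\cat{OSys}^{\wedge}_c$ intrinsically as those with the minimal matrix ordering ($(s_{kl})\ge 0$ iff all scalar compressions $\sum_{k,l}\bar\lambda_k\lambda_l s_{kl}\ge 0$), and then exploiting $\aleph_1$-directedness to push the countably many test conditions (a dense set $D$ of vectors $\lambda$) to a single stage $j^*$, where closedness of the cone of the complete system $S_{j^*}$ finishes the job. Your route is more self-contained and makes the role of the cardinal $\aleph_1$ more transparent (countably many witnesses must be amalgamated); the paper's route buys coherence with the rest of \Cref{subse:fn.sys}, which is organized around the Kadison duality picture, and leans on already-established facts about commutative $C^*$-algebras rather than on the OMIN characterization. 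Two small points: the OMIN characterization you invoke is really Paulsen--Todorov--Tomforde's ``Operator system structures on ordered spaces'' rather than \cite{kptt} itself (the latter only recalls it), and your argument tacitly uses that positivity of an element of $M_n(S)$ is witnessed at some finite stage (i.e.\ that the colimit cone is the union of the images of the cones, with no further Archimedeanization needed); this is true for $\aleph_1$-directed colimits of complete systems by one more directedness argument, and is implicitly relied upon by the paper as well, but it deserves a sentence.
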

\begin{proof}
  The proof proceeds precisely along the lines of \Cref{cor:osys.al1.pres}, employing \cite[Theorem 1.39]{ar}: it suffices to argue that the inclusion functor $U_c$ of \Cref{th:osfcn.fgt} preserves $\aleph_1$-directed colimits. We have repeatedly observed that these are computed set-theoretically in the larger category $\cat{OSys}$, so we need to verify this in the smaller category $\cat{OSys}^{\wedge}_s$.

  An $\aleph_1$-directed system $S_{\lambda'}\xleftarrow{\psi_{\lambda',\lambda}}S_{\lambda}$ in $\cat{OSys}^{\wedge}_c$ corresponds via Kadison's representation theorem \cite[Theorem II.1.8]{alfsen2012compact} to a system
  \begin{equation*}
    K_{\lambda}
    \xleftarrow{\quad\varphi_{\lambda,\lambda'}\quad}
    K_{\lambda'}
    ,\quad
    \lambda\le \lambda'
  \end{equation*}
  of compact convex sets (with affine continuous functions). The claim, in context, reduces to the affine functions on $K:=\varprojlim K_{\lambda}$ being precisely those continuous functions that factor as
  \begin{equation*}
    K
    \xrightarrow{\quad\text{canonical map}\quad}
    K_{\lambda}
    \xrightarrow[\quad\text{continuous affine}\quad]{\quad}
    \bC,
  \end{equation*}
  with two such functions being equal when restricting to equal functions on some $K_{\lambda'}$. This, in turn, follows from the analogous statements in the category $C^*_{c}$ of (unital) commutative $C^*$-algebras: that $\aleph_1$-directed colimits are computed set-theoretically there is checked as in \cite[Proposition 4.30]{2412.20999v1}, and the functions being affine carries through. Indeed, recall that a continuous function $f\in C(K)$ is affine if and only if
  \begin{equation}\label{eq:def.aff}
    \forall\left(\lambda\in [0,1]\right)
    \forall\left(x,y\in K\right)
    \quad:\quad
    f(\lambda x+(1-\lambda)y)=\lambda f(x)+(1-\lambda)f(y). 
  \end{equation}
  Expressing this as an equality $f_{in}(\lambda,x,y)=f_{out}(\lambda,x,y)$
  with $f_{in},f_{out}\in C([0,1],K^2)$ defined as the left-hand and right-hand sides of the equality in \Cref{eq:def.aff}, respectively. Observe next that
  \begin{equation}\label{eq:int.k2}
    [0,1]\times K^2\cong \varprojlim_{\lambda}\left([0,1]\times K_{\lambda}^2\right),
  \end{equation}
  and thus, the desired isomorphism
  \begin{equation*}
    A(K)\cong \varinjlim_{\lambda}A(K_{\lambda})
  \end{equation*}
  follows from its analogue for continuous functions (i.e. $C(\bullet)$ in place of $A(\bullet)$) applied to both $K\cong \varprojlim_{\lambda}K_{\lambda}$ and \Cref{eq:int.k2}. 
\end{proof}

Finally, we consider operator space counterparts $\cat{OSp}^{\bullet}_c$ with $\bullet\in \left\{\text{blank},\wedge\right\}$ and `$\wedge$' denoting the complete objects, as before. The category $\cat{OSp}_c$ consists of the operator spaces that can be completely isometrically embedded into a commutative $C^*$-algebra. In \cite[Proposition 3.3.1]{er_os_2000}, we see that $\cat{OSp}_c$ are identified as those operator spaces that are \emph{minimal} or \emph{commutative} \cite[\S 3.3, discussion post (3.3.11)]{er_os_2000}.

Writing $\cat{Norm}^{\wedge}:=\cat{Ban}$ for the sake of notation uniformity, there are adjunctions \cite[\S 3.3]{er_os_2000}
\begin{equation}\label{eq:osp.min}
  \begin{tikzpicture}[>=stealth,auto,baseline=(current  bounding  box.center)]
    \path[anchor=base] 
    (0,0) node (l) {$\cat{OSp}^{\bullet}$}
    +(4,0) node (r) {$\cat{Norm}^{\bullet}$}
    +(2,0) node () {$\bot$}
    ;
    \draw[->] (l) to[bend left=20] node[pos=.5,auto] {$\scriptstyle U:=\cat{forget}$} (r);
    \draw[->] (r) to[bend left=20] node[pos=.5,auto] {$\scriptstyle \cat{min}$} (l);
  \end{tikzpicture}
\end{equation}
with the tail of the `$\bot$' symbol pointing toward the left adjoint, as is customary \cite[Definition 19.3]{ahs}. We remark that we are committing a slight abuse in notating the two adjunctions by identical symbols. The \emph{minimal} operator spaces (complete or not) are those for which the canonical map $S\to \left(\cat{min}\circ\cat U\right) S$ is an isomorphism. This yields the following operator space variant to \Cref{th:osfcn.a1.pres}. 

\begin{theorem}\label{th:comm.osp}
  The categories $\cat{OSp}^{\bullet}_c$, $\bullet\in \left\{\text{blank},\wedge\right\}$ are locally $\aleph_1$-presentable. 
\end{theorem}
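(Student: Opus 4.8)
The plan is to treat $\cat{OSp}^\bullet_c$ exactly as the function-system category was treated in \Cref{th:osfcn.a1.pres}: exhibit it as a full reflective subcategory of a locally $\aleph_1$-presentable category that is closed under $\aleph_1$-directed colimits, and then quote \cite[Theorem 1.39]{ar}. The ambient category is $\cat{OSp}^\bullet$, which is locally $\aleph_1$-presentable by \Cref{cor:osys.al1.pres} when $\bullet$ is blank and by \cite[Theorem 4.35]{2412.20999v1} when $\bullet=\wedge$. For the reflectivity I would read it off the adjunction \Cref{eq:osp.min}: the canonical map $S\to\cat{min}(US)$ is the unit of $U\dashv\cat{min}$, and because $U\cat{min}\cong\id$ on underlying (normed/Banach) spaces the corresponding counit is an isomorphism, so $\cat{min}$ is fully faithful. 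Its essential image is precisely the class of minimal operator spaces, namely $\cat{OSp}^\bullet_c$; hence $\cat{min}$ restricts to an equivalence $\cat{Norm}^\bullet\simeq\cat{OSp}^\bullet_c$ and simultaneously realizes $\cat{OSp}^\bullet_c$ as reflective in $\cat{OSp}^\bullet$ with reflection $S\mapsto\cat{min}(US)$.

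It then remains to verify closure under $\aleph_1$-directed colimits, i.e. that such a colimit of minimal operator spaces, formed in $\cat{OSp}^\bullet$, is again minimal. Since every object of $\cat{OSp}^\bullet_c$ has the form $\cat{min}(E)$ and since $U$ (a left adjoint) preserves these colimits, this is equivalent to the assertion that $\cat{min}$ preserves $\aleph_1$-directed colimits: for $E=\varinjlim_\lambda E_\lambda$ in $\cat{Norm}^\bullet$, the canonical comparison $\varinjlim_\lambda\cat{min}(E_\lambda)\to\cat{min}(E)$ should be a complete isometry. Here I would use that these colimits are computed set-theoretically \cite[Proposition 4.30]{2412.20999v1}, so that the matrix norm of a fixed $x\in M_n(\cat{min}(E))$ arising from a stage $\lambda_0$ is the infimum over $\lambda\ge\lambda_0$ of its matrix norms at the stages, while on the other side $\cat{min}(E)$ carries the commutative structure of $\mathrm{C}(B_{E^*})$, giving the matrix norm as $\sup_{\phi\in B_{E^*}}\|[\phi(x_{ij})]\|_{M_n(\bC)}$.

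The main obstacle is exactly this comparison of norms, which amounts to interchanging the supremum over the dual ball with the infimum over the directed system; it is the analogue here of the affineness bookkeeping \Cref{eq:def.aff} in \Cref{th:osfcn.a1.pres}. The inequality $\|x\|_{\cat{min}(E)}\le\inf_\lambda\|x\|_{\cat{min}(E_\lambda)}$ is immediate, since every $\phi\in B_{E^*}$ restricts to a contractive functional at each stage. For the reverse inequality I would dualize $E=\varinjlim_\lambda E_\lambda$ to the homeomorphism $B_{E^*}\cong\varprojlim_\lambda B_{E_\lambda^*}$ of weak-$*$ compact balls and argue by compactness: writing $R:=\inf_\lambda\|x\|_{\cat{min}(E_\lambda)}$, the sets $K_\lambda:=\{\psi\in B_{E_\lambda^*}:\|[\psi(x_{ij})]\|\ge R\}$ are nonempty (each stagewise supremum is attained, by weak-$*$ compactness and continuity) and the restriction maps carry $K_\mu$ into $K_\lambda$ for $\mu\ge\lambda$. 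A point of the nonempty inverse limit $\varprojlim_\lambda K_\lambda$ is then a $\phi\in B_{E^*}$ with $\|[\phi(x_{ij})]\|\ge R$, giving $\|x\|_{\cat{min}(E)}\ge R$ and hence the required isometry. For $\bullet=\wedge$ one adds, as in \Cref{cor:osys.al1.pres}, that completeness of the colimit is automatic because it is computed set-theoretically. With closure under $\aleph_1$-directed colimits in hand, \cite[Theorem 1.39]{ar} yields the conclusion for both values of $\bullet$; alternatively, the equivalence $\cat{OSp}^\bullet_c\simeq\cat{Norm}^\bullet$ lets one transport local $\aleph_1$-presentability directly from $\cat{Norm}$ and $\cat{Ban}$.
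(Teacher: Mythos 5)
Your proposal is correct, and the one‑sentence alternative you offer at the very end --- transporting local $\aleph_1$-presentability across the equivalence $\cat{OSp}^{\bullet}_c\simeq \cat{Norm}^{\bullet}$ induced by the fully faithful right adjoint $\cat{min}$ --- is precisely the paper's proof: the strict-quantization property of $\cat{min}$ \cite[\S 3.3]{er_os_2000} gives $U\circ\cat{min}\cong\id$, hence the equivalence onto the minimal operator spaces, and the conclusion follows from the known local $\aleph_1$-presentability of $\cat{Norm}$ and $\cat{Ban}$ (\cite[Example 1.48]{ar}, \cite[Proposition 2.1]{zbMATH07469564}). Your main route --- exhibiting $\cat{OSp}^{\bullet}_c$ as a full reflective subcategory of $\cat{OSp}^{\bullet}$ closed under $\aleph_1$-directed colimits and invoking \cite[Theorem 1.39]{ar}, in parallel with \Cref{cor:osys.al1.pres} and \Cref{th:osfcn.a1.pres} --- is also sound but does strictly more work. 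The reduction of the closure claim to ``$\cat{min}$ preserves $\aleph_1$-directed colimits'' is legitimate (since $U$, being a left adjoint, preserves all colimits), and your verification of the latter is correct: the inequality $\sup_{\phi\in B_{E^*}}\le\inf_{\lambda}$ is immediate, and the reverse follows from the attainment of each stagewise supremum on the weak-$*$ compact ball $B_{E_\lambda^*}$ together with the nonemptiness of the inverse limit of the nonempty compact sets $K_\lambda$, a point of which yields the required functional in $B_{E^*}\cong\varprojlim_\lambda B_{E_\lambda^*}$. What the longer route buys is an extra fact not needed for the theorem but of some independent interest, namely that $\cat{min}$ preserves $\aleph_1$-directed colimits --- equivalently, that minimality of operator spaces survives such colimits formed in $\cat{OSp}^{\bullet}$; the paper's argument sidesteps this entirely by never leaving $\cat{Norm}^{\bullet}$.
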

\begin{proof}
  The fact that $\cat{min}$ is a \emph{strict quantization} in the sense of \cite[\S 3.3]{er_os_2000} means in particular that the counit $U\circ\cat{min}\to \id$ of \Cref{eq:osp.min} is an isomorphism. This gives equivalences $\cat{OSp}^{\bullet}_c\simeq \cat{Norm}^{\bullet}$, and the conclusion follows from the latter's local $\aleph_1$-presentability (\cite[Example 1.48]{ar}, \cite[Proposition 2.1]{zbMATH07469564}, etc.).
\end{proof}

\section{Monadicity for non-self-adjoint operator algebras}\label{se:osoa}

We denote by $\cat{OAlg}$ the category of unital \emph{abstract unital operator algebras} \cite[\S 16, pp.232-233]{pls-bk}: operator spaces with a unital algebra structure such that multiplication is \emph{completely contractive} in the sense of \cite[paragraph post Corollary 16.5]{pls-bk} (\emph{multiplicatively} contractive in the language of \cite[\S 9.1]{er_os_2000}).

The \emph{Haagerup tensor product} $\otimes_h$ \cite[\S 16, p.240]{pls-bk} (or \cite[\S 9.2]{er_os_2000}) makes $\cat{OSp}$ into a monoidal category $(\cat{OSp},\ \otimes_h,\ \mathbf{1}=\bC)$ (\cite[post Proposition 3.1]{zbMATH04050295} or \cite[Proposition 9.2.7]{er_os_2000} deliver associativity). The very definition of an abstract operator algebra and the universality property \cite[Proposition 9.2.2]{er_os_2000} of $\otimes_h$ give the identification
\begin{equation*}
  \cat{OAlg}
  \simeq
  \cat{Mon}\left(\cat{OSp},\ \otimes_h,\ \mathbf{1}\right)
\end{equation*}
with the category of \emph{monoids} \cite[\S 2.2]{zbMATH05312006} (or internal unital, associative algebras) attached to the monoidal structure.

The succeeding \Cref{th:env.oalg} will be an analogue of \Cref{th:osos.fgt} for $\cat{OAlg}$. We recall some terminology. 
\begin{itemize}[wide]
\item A \emph{monad} \cite[Definition 20.1]{ahs} on a category $\cC$ is an endofunctor $\cC\xrightarrow{T}\cC$ equipped with natural transformations
  \begin{equation*}
    T\circ T\xrightarrow{\quad\mu\quad}T
    \quad\text{and}\quad
    \id\xrightarrow{\quad\eta\quad}T
  \end{equation*}
  making $T$ into a unital associative algebras in the monoidal category of $\cC$-endofunctors with composition as the tensor product.

\item For a monad $(T,\mu,\eta)$, the (\emph{Eilenberg-Moore}) category $\cC^T$ of \emph{$T$-algebras} \cite[Definition 20.4]{ahs} consists of objects $c\in \cC$ equipped with morphisms $Tc\to c$ associative and unital with respect to $\mu$ and $\eta$ in the sense familiar from defining modules over algebras.

\item A functor $\cD\xrightarrow{U}\cC$ is \emph{monadic} if there is a monad $T$ on $\cC$ so that
  \begin{equation*}
    \begin{tikzpicture}[>=stealth,auto,baseline=(current  bounding  box.center)]
      \path[anchor=base] 
      (0,0) node (l) {$\cD$}
      +(2,.5) node (u) {$\cC^T$}
      +(4,0) node (r) {$\cC$}
      ;
      \draw[->] (l) to[bend left=6] node[pos=.5,auto] {$\scriptstyle \simeq$}
      (u);
      \draw[->] (u) to[bend left=6] node[pos=.5,auto] {$\scriptstyle \cat{forget}$} (r);
      \draw[->] (l) to[bend right=6] node[pos=.5,auto,swap] {$\scriptstyle U$} (r);
    \end{tikzpicture}
  \end{equation*}
  commutes up to natural isomorphism.
  
  Monads are characterized by \emph{Beck's theorem} (\cite[Theorem 20.17]{ahs}, \cite[\S 3.3, Theorem 10]{bw}) and, in particular, they are right adjoints. Fully faithful right adjoints are monadic attached to an \emph{idempotent} monad \cite[Proposition 4.2.3]{brcx_hndbk-2}, and so \Cref{th:osos.fgt} is effectively a monadicity claim (much like \Cref{th:env.oalg} below).
\end{itemize}

\begin{theorem}\label{th:env.oalg}
  The categories $\cat{OAlg}$ and $\cat{OSp}$ are complete, and the forgetful functor $\cat{OAlg}\xrightarrow{U}\cat{OSp}$ is monadic. 
\end{theorem}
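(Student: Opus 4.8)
The plan is to verify the hypotheses of Beck's monadicity theorem (\cite[Theorem 20.17]{ahs}, \cite[\S 3.3, Theorem 10]{bw}) for $U$, exploiting the identification $\cat{OAlg}\simeq\cat{Mon}(\cat{OSp},\otimes_h,\mathbf 1)$. Completeness of $\cat{OSp}$ is already recorded in \Cref{th:osos.fgt}. I would establish completeness of $\cat{OAlg}$ directly and in parallel with that result: the product of a family of operator algebras is the algebra of bounded tuples with coordinatewise multiplication and unit, carried by the operator-space product $\prod^\infty$, while the equalizer of a parallel pair is the unital subalgebra on which the two maps agree, with inherited structure. These descriptions exhibit $U$ as preserving (indeed creating) all small limits, which is what is needed below and also yields the completeness half of the statement.

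With $\cat{OAlg}$ complete and $U$ continuous, I would obtain the left adjoint $F\dashv U$ from Freyd's adjoint functor theorem (\cite[\S V.6, Theorem 2]{mcl}), exactly as in \Cref{th:osos.fgt}: the solution-set condition holds because any cc map $X\to U(A)$ out of an operator space factors through the operator subalgebra of $A$ generated by the image of $X$ together with the unit, an algebra of vector-space dimension at most $\max(\dim X,\aleph_0)$. Concretely $F(X)$ is the Haagerup tensor algebra $\bigsqcup_{n\ge 0}X^{\otimes_h n}$, whose universal property is \cite[Proposition 9.2.2]{er_os_2000}; this is the free-monoid monad underlying \Cref{th:B}.

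Next, $U$ is conservative: if $\varphi\colon A\to B$ in $\cat{OAlg}$ has underlying a completely isometric bijection with cc inverse, then that inverse automatically respects the unit and the multiplication---these identities are detected on underlying operator spaces---so it is itself a morphism of operator algebras and $\varphi$ is invertible in $\cat{OAlg}$. This step is routine.

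The substantive step is that $U$ creates coequalizers of $U$-split pairs. Given $A\rightrightarrows B$ in $\cat{OAlg}$ whose image under $U$ carries a split coequalizer $q\colon U(B)\to Q$ in $\cat{OSp}$, I would transport the monoid structure to $Q$ along the splitting. The decisive point is that a split coequalizer is an \emph{absolute} colimit, hence preserved by every functor; in particular $(-)\otimes_h(-)$ sends the split diagram to a coequalizer, so the multiplication $Q\otimes_h Q\to Q$ and the unit $\mathbf 1\to Q$ are forced by the universal property, with associativity and unitality inherited from $B$. One then checks that $q$ is a morphism of operator algebras and that $Q$ is the coequalizer in $\cat{OAlg}$ lying over the given one in $\cat{OSp}$. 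I expect this absolute-coequalizer bookkeeping to be the main obstacle, principally the verification that the transported multiplication on $Q$ is again completely contractive so that $Q$ genuinely lies in $\cat{OAlg}$; no new analytic input beyond the monoidal structure of $\otimes_h$ is needed, precisely because absoluteness removes any requirement that $\otimes_h$ preserve general coequalizers. With the left adjoint, conservativity, and creation of $U$-split coequalizers in place, Beck's theorem delivers monadicity of $U$, and identifies the monad as in \Cref{th:B}.
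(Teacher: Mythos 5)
Your proposal is correct and runs the same overall strategy as the paper (completeness via products of bounded tuples and subalgebra equalizers, a left adjoint via Freyd's theorem with the ``subalgebra generated by the image'' solution set, isomorphism reflection, then Beck's theorem), but it diverges genuinely at the decisive step, the coequalizer of a $U$-split pair. The paper uses the reflexive/$U$-contractible variant of Beck's theorem and argues concretely: it forms the ideal $I=\im(d_0-d_1)\le B$, passes to the operator-space quotient $\overline B=B/I$, and invokes the \emph{projectivity} of the Haagerup tensor product \cite[Corollary 5.7(ii)]{pis_os} to identify $B\otimes_h B/(I\otimes B+B\otimes I)$ with $\overline B\otimes_h\overline B$ and conclude that the induced multiplication is completely contractive. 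You instead transport the monoid structure along the splitting, and here the ``main obstacle'' you flag in fact evaporates: taking $m_Q:=q\circ m_B\circ(s\otimes_h s)$ exhibits the multiplication on $Q$ as a composite of completely contractive maps, so complete contractivity is automatic and no property of $\otimes_h$ beyond functoriality is used --- absoluteness is precisely what spares you any quotient-norm computation. The trade-off is that the paper's route requires genuine analytic input (that $\otimes_h$ respects operator-space quotients) but in return identifies the coequalizer explicitly as $B/I$, a description it reuses in \Cref{cor:cast2osys.mndc} and \Cref{pr:coeq.not.coprod}; your route is purely formal and in effect proves that $\cat{Mon}(\cC,\otimes,\mathbf 1)\to\cC$ is monadic over any monoidal category once a left adjoint exists, in the spirit of \cite[\S 2.6, Corollary]{zbMATH05312006}, which the paper itself cites for the algebraic shadow of the argument. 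One small caution on an inessential aside: the identification of $F(X)$ with $\bigsqcup_{n\ge 0}X^{\otimes_h n}$ is not justified by \cite[Proposition 9.2.2]{er_os_2000} (that is the universal property of $\otimes_h$ for bilinear maps, not of a tensor algebra), and pinning down the correct operator-space structure on the free algebra is subtler; the paper sidesteps this by pointing to Pisier's construction in \Cref{re:pisier.osp2oalg}, and your main argument does not depend on it.
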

\begin{proof}
  Much of the proof parallels that of \Cref{th:osos.fgt}.

  \begin{enumerate}[(I),wide]
  \item \textbf{: Completeness.} The $\cat{OSp}$-products (given by \cite[Proposition 4.11]{2412.20999v1}) and equalizers \cite[Proposition 4.13]{2412.20999v1} plainly function for $\cat{OAlg}$ just as well as they do for operator spaces, and $U$ preserves them. It then follows \cite[Proposition 13.4]{ahs} that $U$ is continuous.

  \item\label{item:th:env.oalg:pf.adj} \textbf{: Adjointness.} To conclude via \cite[Theorem 18.12]{ahs}, it again suffices to check the solution-set condition: given an operator space $E$, there is a set of complete contractions $E\xrightarrow{\varphi_s} A_s$ into operator algebras such that every complete contraction $E\to A$ factors as
    \begin{equation*}
      \begin{tikzpicture}[>=stealth,auto,baseline=(current  bounding  box.center)]
        \path[anchor=base] 
        (0,0) node (l) {$E$}
        +(2,.5) node (u) {$A_s$}
        +(4,0) node (r) {$A$}
        ;
        \draw[->] (l) to[bend left=6] node[pos=.5,auto] {$\scriptstyle \varphi_s$} (u);
        \draw[dashed,->] (u) to[bend left=6] node[pos=.5,auto] {$\scriptstyle $} (r);
        \draw[->] (l) to[bend right=6] node[pos=.5,auto,swap] {$\scriptstyle \varphi$} (r);
      \end{tikzpicture}
    \end{equation*}
    Simply take for the $\varphi_s$ those morphisms whose codomain $A_s$ is generated as an algebra by a subspace of dimension $\le \dim E$. 

  \item\label{item:th:env.oalg:pf.mndc} \textbf{: Monadicity.} We will apply the aforementioned \cite[\S 3.3, Theorem 10]{bw}. The first of that result's three hypotheses ($U$'s being a right adjoint) has been verified above. The second requirement is that $U$ \emph{reflect isomorphisms} \cite[\S 3.3, p.103]{bw}:
    \begin{equation*}
      \forall \left(A\xrightarrow{\quad f\quad}B\right)\in \cat{OAlg}
      \quad:\quad
      Uf\text{ invertible in }\cat{OSp}
      \xRightarrow{\quad}
      f\text{ invertible};
    \end{equation*}
    this is immediate, the isomorphisms in $\cat{OAlg}$ being precisely the unital, multiplicative, completely isometric bijections. It thus remains to check that whenever a pair $A\xrightarrow{d_i,\ i=0,1}B$ of morphisms in $\cat{OAlg}$
    \begin{itemize}[wide]
    \item is \emph{reflexive} \cite[\S 3.3, p.108]{bw} in the sense that the two arrows have a common right inverse;

    \item and \emph{$U$-contractible} \cite[\S 3.3, p.105]{bw} in the sense of affording a diagram
      \begin{equation}\label{eq:cntrct.coeq}
        \begin{tikzpicture}[>=stealth,auto,baseline=(current  bounding  box.center)]
          \path[anchor=base] 
          (0,0) node (l) {$UA$}
          +(3,0) node (m) {$UB$}
          +(6,0) node (r) {$E$}
          ;
          \draw[->] (l) to[bend left=30] node[pos=.5,auto] {$\scriptstyle Ud_0$} (m);
          \draw[<-] (l) to[bend left=0] node[pos=.5,auto] {$\scriptstyle t$} (m);
          \draw[->] (l) to[bend right=30] node[pos=.5,auto,swap] {$\scriptstyle Ud_1$} (m);
          \draw[->] (m) to[bend left=10] node[pos=.5,auto] {$\scriptstyle d$} (r);
          \draw[->] (r) to[bend left=10] node[pos=.5,auto] {$\scriptstyle s$} (m);
        \end{tikzpicture}
      \end{equation}
      in $\cat{OSp}$ with
      \begin{equation*}
        ds=\id
        ,\quad
        (Ud_0) t=\id
        \quad\text{and}\quad
        (Ud_1)t=sd,
      \end{equation*}      
    \end{itemize}
    that pair has a coequalizer preserved by $U$. The monadicity of the forgetful functor from unital associative algebras to vector spaces (e.g. \cite[\S 2.6, Corollary]{zbMATH05312006}) already ensures that under the circumstances the range
    \begin{equation}\label{eq:id0d1}
      I
      :=
      \im(d_0-d_1)
      =
      \left\{(d_0-d_1)(a)\ :\ a\in A\right\}
      \le
      B
    \end{equation}
    is an ideal, so the \emph{operator space quotient} \cite[Proposition 3.1.1]{er_os_2000} $\overline{B}:=B/I$ inherits a unital algebra structure from $B$. That this gives $\overline{B}$ an operator-algebra structure (i.e. its multiplication is continuous for the Haagerup tensor norm) follows from the factorization
    \begin{equation*}
      \begin{tikzpicture}[>=stealth,auto,baseline=(current  bounding  box.center)]
        \path[anchor=base] 
        (0,0) node (l) {$B\otimes_h B$}
        +(4,.5) node (u) {$B$}
        +(2,-1) node (dl) {$B\otimes_h B/\left(I\otimes B+B\otimes I\right)$}
        +(6,-1) node (dr) {$\overline{B}\otimes_h \overline{B}$}
        +(8,0) node (r) {$\overline{B}$,}
        ;
        \draw[->] (l) to[bend left=6] node[pos=.5,auto] {$\scriptstyle \text{completely contractive}$} node[pos=.5,auto,swap] {$\scriptstyle \text{mult.}$} (u);
        \draw[->>] (u) to[bend left=6] node[pos=.5,auto] {$\scriptstyle $} (r);
        \draw[->] (l) to[bend right=6] node[pos=.5,auto,swap] {$\scriptstyle $} (dl);
        \draw[->] (dl) to[bend right=6] node[pos=.5,auto,swap] {$\scriptstyle \cong$} (dr);
        \draw[->] (dr) to[bend right=6] node[pos=.5,auto,swap] {$\scriptstyle $} (r);
      \end{tikzpicture}
    \end{equation*}
    the bottom map being an isomorphism of operator spaces by the \emph{projectivity} \cite[Corollary 5.7(ii)]{pis_os} of the Haagerup tensor product. 
  \end{enumerate}
\end{proof}

\begin{remark}\label{re:pisier.osp2oalg}
  In reference to step \Cref{item:th:env.oalg:pf.adj} in the preceding proof, see also the explicit construction of the left adjoint to $\cat{OAlg}^\wedge\xrightarrow{U}\cat{OSp}^\wedge$ in \cite[pre Remark 6.5]{pis_os}.
\end{remark}

Before proceeding, we record a consequence of the proof of \Cref{th:env.oalg}.

\begin{corollary}\label{cor:cast2osys.mndc}
  The forgetful functor $\cat{C}^*\to \cat{OSys}$ from unital $C^*$-algebras to operator systems is monadic. 
\end{corollary}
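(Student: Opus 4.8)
The plan is to reprise, almost verbatim, the three-step verification of Beck's monadicity theorem \cite[\S 3.3, Theorem 10]{bw} from \Cref{th:env.oalg}, now for $U\colon\cat{C}^*\to\cat{OSys}$. Completeness of $\cat{C}^*$ and continuity of $U$ are immediate: products are the $\ell^\infty$-products $\prod^\infty A_i$ and equalizers are the $C^*$-subalgebras on which two $*$-homomorphisms agree, and both coincide with the $\cat{OSys}$-limits described in \Cref{th:osos.fgt}. Adjointness then follows from the solution-set condition exactly as in step \Cref{item:th:env.oalg:pf.adj}: for an operator system $S$ one takes for the solution set all ucp maps into unital $C^*$-algebras generated, as $C^*$-algebras, by the image of $S$ (hence of density character at most $\max(\dim S,\aleph_0)$), every ucp map $\varphi\colon S\to A$ corestricting to the generated $C^*$-subalgebra $C^*(\varphi(S))\subseteq A$. (Equivalently, the left adjoint is the universal $C^*$-algebra of an operator system.)

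Reflection of isomorphisms is immediate and for the same reason as in \Cref{item:th:env.oalg:pf.mndc}: a unital $*$-homomorphism $f$ whose underlying map $Uf$ is a bijective complete order isomorphism is in particular a bijective $*$-homomorphism, hence a $*$-isomorphism and invertible in $\cat{C}^*$.

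The substance lies in the coequalizers of reflexive, $U$-contractible pairs $A\xrightarrow{d_0,d_1}B$. Since $d_0,d_1$ are $*$-homomorphisms, the set $I:=\{(d_0-d_1)(a):a\in A\}$ is self-adjoint, and---crucially---it is already a two-sided \emph{algebraic} ideal of $B$: this is precisely the fact invoked in \Cref{th:env.oalg}, namely the monadicity of the forgetful functor from unital associative algebras to vector spaces \cite[\S 2.6, Corollary]{zbMATH05312006}, applied to our pair, which is reflexive and $U$-contractible over $\cat{Vect}$ because the $\cat{OSys}$-contraction descends along $\cat{OSys}\to\cat{Vect}$. Consequently its closure $J:=\overline{I}$ is a closed two-sided $*$-ideal, and $q\colon B\to B/J$ is the coequalizer of $d_0,d_1$ in $\cat{C}^*$.

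It remains to see that $U$ preserves this coequalizer, and here the role played in \Cref{th:env.oalg} by the projectivity of the Haagerup tensor product is taken over by the standard identification of a $C^*$-quotient by a closed ideal with the corresponding operator-system quotient \cite{kptt}: the underlying operator system of $B/J$ is the operator-system quotient of $B$ by the kernel $J$. On the other hand, the coequalizer of $Ud_0,Ud_1$ in the cocomplete category $\cat{OSys}$ (\Cref{cor:osys.al1.pres}) is the quotient of $B$ by the smallest closed operator-system kernel $\mathcal J$ containing $I$; since $J=\overline I$ is itself such a kernel one has $\mathcal J\subseteq J$, while $\mathcal J\supseteq\overline I=J$ because $\mathcal J$ is closed and contains $I$. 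Thus $\mathcal J=J$, the map $Uq$ is the $\cat{OSys}$-coequalizer, and \cite[\S 3.3, Theorem 10]{bw} applies. I expect the main obstacle to be exactly this last identification---that the operator-system kernel generated by the relations coincides with the closed two-sided ideal they generate, and that the operator-system structure on the $C^*$-quotient is the quotient one---since it is what binds together the $C^*$-algebraic and operator-system descriptions of the coequalizer.
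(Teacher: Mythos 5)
Your overall skeleton is the paper's: the authors also prove this by rerunning step \Cref{item:th:env.oalg:pf.mndc} of the proof of \Cref{th:env.oalg} (Beck's theorem for reflexive $U$-contractible pairs), with the left adjoint being the universal $C^*$-algebra of an operator system as in Kirchberg--Wassermann. The isomorphism-reflection and solution-set portions of your argument match what the paper leaves implicit and are fine.

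Where you diverge is the coequalizer step, and this is also where your argument is not yet self-contained. You pass to the closure $J=\overline I$ and then try to reconcile the $C^*$-quotient $B/J$ with the $\cat{OSys}$-coequalizer by describing the latter as ``the quotient of $B$ by the smallest closed operator-system kernel containing $I$.'' That description is not free: it requires the universal property of the Kavruk--Paulsen--Todorov--Tomforde operator-system quotient, the existence of smallest kernels, and the identification of the $C^*$-quotient with the operator-system quotient --- you correctly flag this bundle as the main obstacle. The paper dissolves the obstacle with one observation you miss: by the contractibility identities in \Cref{eq:cntrct.coeq}, the set $I=\im(d_0-d_1)$ of \Cref{eq:id0d1} is exactly the kernel of the continuous idempotent $sd=d_1t$, hence is \emph{already closed}. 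Consequently $B/I$ is a $C^*$-algebra on the nose (no closure, no generated kernel), and since a split coequalizer is an absolute colimit, the $\cat{OSys}$-coequalizer is the given $E\cong UB/I$; the induced ucp bijection $U(B/I)\to E$ has ucp inverse induced by $s$, so $U$ preserves the coequalizer with no appeal to the general quotient theory. Your route can be completed by supplying the KPTT citations, but as written the identification of the $\cat{OSys}$-coequalizer is asserted rather than proved, and the closure-taking is a red herring that the closedness of $I$ renders unnecessary.
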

\begin{proof}
  Step \Cref{item:th:env.oalg:pf.mndc} in the proof of \Cref{th:env.oalg} goes through with nothing but the obvious modifications. One point perhaps worth making explicitly is that because the ideal $I$ of \Cref{eq:id0d1} is also the kernel of the continuous idempotent map $sd=d_1 t$ of \Cref{eq:cntrct.coeq}, it is in the present context automatically closed and hence $B/I$ acquires a $C^*$-algebra structure.
\end{proof}

The monadicity of \Cref{th:env.oalg} will now allow us to lift the local presentability from $\cat{OSp}$ to $\cat{OAlg}$.

\begin{theorem}\label{th:oalg.al1}
  The monad $\cat{OSp}\xrightarrow{T}\cat{OSp}$ attached to the forgetful functor $U$ of \Cref{th:env.oalg} preserves $\aleph_1$-directed colimits. In particular, $\cat{OAlg}\simeq \cat{OSp}^T$ is locally $\aleph_1$-presentable. 
\end{theorem}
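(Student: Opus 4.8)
The plan is to establish two things in sequence: first, that the monad $T$ on $\cat{OSp}$ coming from the adjunction of \Cref{th:env.oalg} preserves $\aleph_1$-directed colimits, and second, that this fact together with the local $\aleph_1$-presentability of $\cat{OSp}$ (\Cref{cor:osys.al1.pres}) forces $\cat{OSp}^T\simeq\cat{OAlg}$ to be locally $\aleph_1$-presentable as well. The second step is a standard transfer-of-presentability result: by \cite[Theorem 2.78]{ar}, if $\cC$ is locally $\lambda$-presentable and $T$ is a $\lambda$-accessible monad (i.e.\ $T$ preserves $\lambda$-directed colimits), then the Eilenberg--Moore category $\cC^T$ is again locally $\lambda$-presentable. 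So almost all the work is in verifying accessibility of $T$.

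For the accessibility claim, I would identify $T$ explicitly as the free-operator-algebra monad. By \Cref{th:B} (or directly from the monoidal description $\cat{OAlg}\simeq\cat{Mon}(\cat{OSp},\otimes_h,\mathbf 1)$), the free operator algebra on an operator space $E$ is the \emph{tensor algebra}
\begin{equation*}
  TE\;=\;\coprod_{n\ge 0}E^{\otimes_h n}\;=\;\mathbf 1\,\oplus\,E\,\oplus\,(E\otimes_h E)\,\oplus\,\cdots,
\end{equation*}
with multiplication given by concatenation. Thus it suffices to show that each functor $E\mapsto E^{\otimes_h n}$ and the coproduct $\coprod_n$ preserve $\aleph_1$-directed colimits. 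Coproducts commute with all colimits, so the crux is the tensor powers, and by induction on $n$ it is enough to treat the single bifunctor $\otimes_h$ and show that $\otimes_h$ preserves $\aleph_1$-directed colimits in each variable separately (a colimit-preserving bifunctor preserves them jointly along a directed system). I expect this last point—that the Haagerup tensor product preserves $\aleph_1$-directed colimits—to be the main obstacle, since the Haagerup norm is defined through an infimum over factorizations and is not a priori compatible with the set-theoretic description of directed colimits.

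To handle it, I would lean on the fact (used repeatedly, as in \cite[Proposition 4.30]{2412.20999v1}) that $\aleph_1$-directed colimits in $\cat{OSp}$ are computed set-theoretically: the underlying colimit is the algebraic directed union, and the matrix norms are the obvious ones. Concretely, for an $\aleph_1$-directed system $(E_\lambda)$ with colimit $E=\varinjlim E_\lambda$ and any fixed operator space $F$, I want the canonical completely contractive map $\varinjlim_\lambda(E_\lambda\otimes_h F)\to E\otimes_h F$ to be a completely isometric isomorphism. Surjectivity (density) is immediate since every elementary tensor $x\otimes y$ with $x\in E$ already lies in some $E_\lambda$. For the isometry, the key is that the Haagerup norm of a single element of $M_n(E\otimes_h F)$ is realized (or approximated) by a factorization $u\odot v$ through a \emph{finite} intermediate matrix size, and all finitely many entries of $u$ lie in some $E_\lambda$ by $\aleph_1$-directedness (countably many entries suffice, which is exactly where $\aleph_1$ rather than $\aleph_0$ enters); hence the infimum computed in $E_\lambda\otimes_h F$ agrees with that in $E\otimes_h F$ up to $\varepsilon$, and passing to the colimit gives equality of norms. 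Symmetric reasoning handles the second variable.

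Assuming this is in hand, I would assemble the proof: $\otimes_h$ preserves $\aleph_1$-directed colimits in each variable, hence so does each $n$-fold power $(-)^{\otimes_h n}$ by induction, hence so does $T=\coprod_n(-)^{\otimes_h n}$; therefore $T$ is $\aleph_1$-accessible. Combined with the local $\aleph_1$-presentability of $\cat{OSp}$ from \Cref{cor:osys.al1.pres} and the monadic identification $\cat{OAlg}\simeq\cat{OSp}^T$ of \Cref{th:env.oalg}, the cited transfer theorem \cite[Theorem 2.78]{ar} yields that $\cat{OAlg}$ is locally $\aleph_1$-presentable, completing the proof of \Cref{th:A}(d). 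The one subtlety to flag is confirming that the monad delivered abstractly by \Cref{th:env.oalg} really is this tensor-algebra monad; this follows from the universal property of $\otimes_h$ (\cite[Proposition 9.2.2]{er_os_2000}) identifying $n$-linear completely contractive maps out of $E^{\times n}$ with completely contractive maps out of $E^{\otimes_h n}$, so that $TE$ corepresents the underlying-operator-space functor on $\cat{OAlg}$.
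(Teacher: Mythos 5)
Your proposal is correct in outline and rests on the same two pillars as the paper's argument: the transfer result \cite[\S 2.78, Remark]{ar} for accessible monads over a locally presentable base, and --- doing all the real work --- the fact that $\otimes_h$ preserves $\aleph_1$-directed colimits in each variable. Your sketch of the latter (approximate the Haagerup norm of an element of the algebraic tensor product by countably many finite factorizations $u\odot v$, then use $\aleph_1$-directedness to push all the entries involved into a single $E_\lambda$ where the norms agree with their colimit norms) is essentially the paper's \Cref{le:haag.al1.pres}. Where you genuinely diverge is in the reduction to that lemma. The paper never computes $T$ explicitly: it shows that the forgetful functor $U$ itself preserves (indeed creates) $\aleph_1$-directed colimits --- the colimit is formed set-theoretically in $\cat{OSp}$ and the Haagerup lemma is invoked once, to check that multiplication on the colimit is completely contractive on its Haagerup square --- and then $T=UF$ is accessible because the left adjoint $F$ preserves all colimits. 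You instead identify $T$ as the tensor-algebra monad $\coprod_{n}(-)^{\otimes_h n}$ and check accessibility termwise. Your route buys a concrete description of the free operator algebra (essentially Pisier's construction, which the paper only mentions in \Cref{re:pisier.osp2oalg} for the complete case), but it incurs a debt the paper deliberately avoids: you must actually establish that the $\cat{OSp}$-coproduct of the Haagerup powers, with concatenation, carries a completely contractive multiplication and corepresents $U$ in the non-complete setting. The universal property of $\otimes_h$ identifies multilinear complete contractions with maps out of $E^{\otimes_h n}$, but by itself it does not show that concatenation on the coproduct is completely contractive (one needs $\otimes_h$ to distribute over the $1$-direct sum) nor that the coproduct norm coincides with the universal operator-algebra norm on the tensor algebra. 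These facts are true and standard, so this is a fillable gap rather than a flaw, but as written it is the one step of your argument that is asserted rather than proved; the paper's reduction through $U$ sidesteps it entirely.
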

\begin{proof}
  The second claim follows from the first by \cite[\S 2.78, Remark]{ar}, given the local $\aleph_1$-presentability of $\cat{OSp}$ \cite[Theorem 4.35]{2412.20999v1}.

  It remains to argue that $T$ preserves $\aleph_1$-directed colimits; we do so by arguing that $U$ does (also verifying in passing that those colimits do exist in $\cat{OAlg}$). To this end, consider a $\aleph_1$-directed colimit
  \begin{equation*}
    A_{\alpha}
    \xrightarrow{\quad\psi_{\alpha}\quad}
    A
    =
    \varinjlim_{\lambda\in \left(\Lambda,\le\right)}A_{\lambda}
  \end{equation*}
  in $\cat{OSp}$, whose objects and connecting maps $A_{\lambda'}\xleftarrow{\psi_{\lambda',\lambda}}A_{\lambda}$ are, in fact, in $\cat{OAlg}$.

  As a set $A$ is nothing but the colimit of the corresponding $\cat{Set}$ diagram (so the $\psi_{\lambda}$ are unital-algebra morphisms), and the matricial norms \cite[Proposition 4.30]{2412.20999v1} on $M_n(A)$ are the expected
  \begin{equation*}
    \|a\|=\inf\left\{\left\|a_{\lambda}\right\|\text{ in }M_n(A_{\lambda})\ :\ \psi_{\lambda}(a_{\lambda})=a\right\}
    ,\quad
    \forall
    a\in M_n(A). 
  \end{equation*}
  It remains to argue that these matricial norms are compatible with the algebra structure, in the sense that the multiplication is continuous on the Haagerup tensor square $A\otimes_h A$. This in turn follows from the fact that the Haagerup tensor product preserves $\aleph_1$-directed colimits, relegated to \Cref{le:haag.al1.pres}. 
\end{proof}

We follow a number of standard sources (e.g. \cite[(9.1.7)]{er_os_2000} or \cite[Chapter 5]{pis_os}) in denoting by
\begin{equation*}
  M_{mk}(E)\otimes M_{kn}(F)
  \ni
  (e_{ij})\otimes (f_{jk})
  \xmapsto{\quad\odot\quad}
  \left(\sum_j e_{ij}\otimes f_{jk}\right)_{i,k}
  \in
  M_{mn}(E\otimes F)
\end{equation*}
the bilinear maps ubiquitous in work on Haagerup tensor products.

\begin{lemma}\label{le:haag.al1.pres}
  For $\aleph_1$-directed partially ordered sets $(\Lambda_{\bullet},\ \le)$, $\bullet\in\{0,1\}$ and $\Lambda_{\bullet}$-indexed diagrams $(E_{\lambda})_{\lambda\in \Lambda_{\bullet}}$ in $\cat{OSp}$ the canonical map
  \begin{equation*}
    \varinjlim_{\lambda_{\bullet}}E_{\lambda_0}\otimes_h F_{\lambda_1}
    \xrightarrow{\quad}
    \left(\varinjlim_{\lambda_0}E_{\lambda_0}\right)
    \otimes_h
    \left(\varinjlim_{\lambda_1}F_{\lambda_1}\right)
  \end{equation*}
  is an isomorphism in $\cat{OSp}$. 
\end{lemma}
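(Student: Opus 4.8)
The plan is to prove that the canonical map $\Phi$ is a completely isometric bijection, handling the underlying vector spaces and the matricial norms as two separate tasks. First I would verify that $\Phi$ is a linear bijection. Since the algebraic tensor product $\otimes$ is a left adjoint in each variable it preserves colimits separately, and since $\Lambda_0\times\Lambda_1$ is again $\aleph_1$-directed (hence filtered) the colimit over the product poset agrees with the iterated colimit, giving
\begin{equation*}
  \varinjlim_{\lambda_{\bullet}}\left(E_{\lambda_0}\otimes F_{\lambda_1}\right)
  \cong
  \left(\varinjlim_{\lambda_0}E_{\lambda_0}\right)\otimes\left(\varinjlim_{\lambda_1}F_{\lambda_1}\right)
\end{equation*}
as vector spaces. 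Because $\aleph_1$-directed colimits in $\cat{OSp}$ are computed set-theoretically \cite[Proposition 4.30]{2412.20999v1}, the left-hand colimit is exactly the underlying space of $\varinjlim_{\lambda_{\bullet}}E_{\lambda_0}\otimes_h F_{\lambda_1}$, and in the non-complete setting the Haagerup tensor product has underlying space the algebraic tensor product. Thus $\Phi$ is the identity on a common underlying vector space, and only the comparison of matricial norms remains.

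Write $E:=\varinjlim E_{\lambda_0}$, $F:=\varinjlim F_{\lambda_1}$, and let $\|\cdot\|_{\mathrm{L},n}$ and $\|\cdot\|_{\mathrm{R},n}$ denote the $M_n$-norms on the domain and codomain of $\Phi$. As a morphism of $\cat{OSp}$, $\Phi$ is completely contractive, so $\|\cdot\|_{\mathrm{R},n}\le\|\cdot\|_{\mathrm{L},n}$ holds for free; the content is the reverse inequality. Fix $u$ in the common space and a finite Haagerup factorization $u=a\odot b$ with $a\in M_{n,k}(E)$, $b\in M_{k,n}(F)$ and $\|a\|\,\|b\|\le\|u\|_{\mathrm{R},n}+\varepsilon$. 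The matrices $a,b$ have only finitely many entries; by directedness of $\Lambda_0$ and $\Lambda_1$ these entries arise from single stages, and by the infimum description of the colimit norm \cite[Proposition 4.30]{2412.20999v1} one may choose stages $\lambda_0,\lambda_1$ and lifts $\tilde a\in M_{n,k}(E_{\lambda_0})$, $\tilde b\in M_{k,n}(F_{\lambda_1})$ with $\|\tilde a\|\le\|a\|+\varepsilon$ and $\|\tilde b\|\le\|b\|+\varepsilon$. Then $\tilde a\odot\tilde b\in M_n(E_{\lambda_0}\otimes_h F_{\lambda_1})$ is a representative of $u$ whose Haagerup norm is at most $\|\tilde a\|\,\|\tilde b\|$, so the infimum formula for $\|\cdot\|_{\mathrm{L},n}$ yields $\|u\|_{\mathrm{L},n}\le(\|a\|+\varepsilon)(\|b\|+\varepsilon)$. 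Letting $\varepsilon\to 0$ and optimizing over factorizations gives $\|u\|_{\mathrm{L},n}\le\|u\|_{\mathrm{R},n}$, hence equality, so $\Phi$ is a complete isometry and an isomorphism in $\cat{OSp}$.

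The only genuine obstacle is the norm bookkeeping in the lifting step. Because the colimit norm is an infimum rather than an attained value, lifting a near-optimal factorization forces one to push finitely many entries simultaneously far enough along the two directed systems to recover their colimit norms up to $\varepsilon$, and then to check that the submultiplicativity $\|\tilde a\odot\tilde b\|_h\le\|\tilde a\|\,\|\tilde b\|$ persists at the finite stage $(\lambda_0,\lambda_1)$. Both points are routine once directedness is used to place all the finitely many entries at a common index, which is precisely where the (filtered, indeed $\aleph_1$-directed) structure of $\Lambda_0\times\Lambda_1$ enters; no completeness of the spaces is needed, so the argument stays entirely within the non-complete category.
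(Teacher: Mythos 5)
Your proof is correct and follows essentially the same route as the paper's: identify the underlying vector spaces using the set-theoretic computation of $\aleph_1$-directed colimits in $\cat{OSp}$, note that the canonical map is automatically completely contractive, and obtain the reverse inequality by lifting a near-optimal Haagerup factorization of $u$ to a finite stage via directedness. The only cosmetic difference is that the paper first separates variables (treating $E\otimes_h-$ and $-\otimes_h F$ via colimit--colimit commutation) and pushes far enough along $\Lambda$ to make the lifted norms exactly equal, whereas you work over the product poset with $\varepsilon$-approximations throughout; both versions are sound.
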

\begin{proof}
  Given colimit-colimit commutation \cite[\S IX.2, equation (2)]{mcl}, the claim reduces to showing that $E\otimes_h-$ and $-\otimes_h F$ both preserve $\aleph_1$-directed colimits. We focus on the former, to fix ideas, seeking to show that the canonical bijection, which is completely contractive,
  \begin{equation}\label{eq:can.colim.map}
    \varinjlim_{\lambda\in \left(\Lambda,\le\right)}\left(E\otimes_h F_{\lambda}\right)
    \lhook\joinrel\xrightarrowdbl{\qquad}
    E\otimes_h\left(\varinjlim_{\lambda} F_{\lambda}\right)
    \quad
    \left(\text{$\aleph_1$-directed}\right),
  \end{equation}
  is in fact completely isometric. The argument will make it clear that the right-handed counterpart is entirely parallel (the asymmetry \cite[Propositions 9.3.1 and 9.3.2]{er_os_2000} of $\otimes_h$ notwithstanding).  

  In general, for a directed set colimit $S=\varinjlim_{\lambda}S_{\lambda}$ of sets, we refer to an element $s_{\lambda}\in S_{\lambda}$ mapping onto $s\in S$ as a \emph{preimage} (or \emph{$\lambda$-preimage}) of $s$. Consider an element
  \begin{equation*}
    x\in M_{mn}\left(E\otimes_h F\right)
    ,\quad
    F:=
    \left(\varinjlim_{\lambda} F_{\lambda}\right).
  \end{equation*}
  The Haagerup norm $\|x\|_h$ of $x$ is defined by
  \begin{equation}\label{eq:xhnorm}
    \inf\left\{
      \left\|(e_{ij})\right\|\cdot \left\|(f_{jk})\right\|
      \ :\
      (e_{ij})\in M_{mk}(E)
      ,\
      (f_{ij})\in M_{kn}(F)
      \ \text{and}\ 
      (e_{ij})\odot (f_{jk}) = x
    \right\}.
  \end{equation}
  We can select $(e^{(d)}_{ij})$ and $(f^{(d)}_{jk})$ as in \Cref{eq:xhnorm} with
  \begin{equation*}
    \left\|(e^{(d)}_{ij})\right\|\cdot \left\|(f^{(d)}_{jk})\right\|\le \|x\|_h+\frac 1d
    ,\quad
    d\in \bZ_{>0}.
  \end{equation*}
    The fact $\Lambda$ is $\aleph_1$-directed guarantees that the preimages $f^{(d)}_{\lambda\mid jk}\in F_{\lambda}$ of the $f^{(d)}_{jk}$ are in $M_{kn}\left(F_{\lambda}\right)$ for sufficiently large $\lambda$. Thus, their respective norms agree with those of $f^{(d)}_{jk}$. This then provides preimages
  \begin{equation*}
    x_{\lambda} := \left(e^{(d)}_{ij}\right)\odot \left(f^{(d)}_{\lambda\mid jk}\right)
    \in
    M_{mn}\left(E\otimes F_{\lambda}\right)
  \end{equation*}
  of $x$ with Haagerup norm $\frac 1d$-close to that of $x$. By enlarging $\lambda$ if necessary (again recalling that $\Lambda$ is $\aleph_1$-directed), we have preimages of $x$ in $M_{mn}\left(E\otimes F_{\lambda}\right)$ whose Haagerup norm is tends to $\|x\|_h$. 

  A third application of $\Lambda$ being $\aleph_1$-directed allows us to assume that the norms of $\iota_{\lambda',\lambda}\left(x_{\lambda}\right)$ are all equal for the connecting maps
  \begin{equation*}
    E\otimes_h F_{\lambda}
    \xrightarrow{\quad \iota_{\lambda',\lambda}\quad}
    E\otimes_h F_{\lambda'}
  \end{equation*}
  effecting the left-hand colimit in \Cref{eq:can.colim.map}, and the conclusion follows from our having transported $x$ to the bottom corner of the commutative diagram
  \begin{equation*}
    \begin{tikzpicture}[>=stealth,auto,baseline=(current  bounding  box.center)]
      \path[anchor=base] 
      (0,0) node (l) {$\varinjlim_{\lambda\in \left(\Lambda,\le\right)}\left(E\otimes_h F_{\lambda}\right)$}
      +(3,-.5) node (d) {$E\otimes F_{\lambda}$}
      +(6,0) node (r) {$E\otimes_h\left(\varinjlim_{\lambda} F_{\lambda}\right)$}
      ;
      \draw[right hook->>] (l) to[bend left=6] node[pos=.5,auto] {$\scriptstyle $} (r);
      \draw[->] (d) to[bend left=6] node[pos=.5,auto] {$\scriptstyle $} (l);
      \draw[->] (d) to[bend right=6] node[pos=.5,auto] {$\scriptstyle $} (r);      
    \end{tikzpicture}
  \end{equation*}
  with the bottom maps preserving its norm. 
\end{proof}

\begin{remark}\label{re:met.al1.pres}
  Ultimately, the crux of the proof of \Cref{le:haag.al1.pres} is the $\kappa$-presentability, for every uncountable regular cardinal $\kappa$, of metric spaces of cardinality $\kappa$ in the locally $\aleph_1$-presentable category $\cat{Met}$ of metric spaces with contractions \cite[Example 2.3(1)]{zbMATH07461229}. This follows immediately from \cite[Lemma 2.4 supplemented by Remark 2.5(1)]{zbMATH07461229}, and is also ultimately what drives both \cite[Lemma 3.4 and Proposition 3.5]{2412.20999v1}.
\end{remark}

All in all, the functors discussed in \Cref{th:osos.fgt,th:env.oalg} and \Cref{cor:cast2osys.mndc} fit into the following diagram of monadic forgetful/inclusion functors between locally $\aleph_1$-presentable categories:
\begin{equation*}
  \begin{tikzpicture}[>=stealth,auto,baseline=(current  bounding  box.center)]
    \path[anchor=base] 
    (0,0) node (l) {$\cat{C}^*$}
    +(2,.5) node (u) {$\cat{OAlg}$}
    +(2,-.5) node (d) {$\cat{OSys}$}
    +(4,0) node (r) {$\cat{OSp}$}
    ;
    \draw[right hook->] (l) to[bend left=6] node[pos=.5,auto] {$\scriptstyle $} (u);
    \draw[->] (u) to[bend left=6] node[pos=.5,auto] {$\scriptstyle \cat{forget}$} (r);
    \draw[->] (l) to[bend right=6] node[pos=.5,auto,swap] {$\scriptstyle \cat{forget}$} (d);
    \draw[right hook->] (d) to[bend right=6] node[pos=.5,auto,swap] {$\scriptstyle $} (r);
  \end{tikzpicture}
\end{equation*}
Here,
\begin{itemize}[wide]
\item The local $\aleph_1$-presentability of the category $\cat{C}^*$ of unital $C^*$-algebras follows from \cite[Theorem 3.28]{ar} via \cite[Theorem 2.4]{pr1}.

\item The local $\aleph_1$-presentability for $\cat{OSp}$, $\cat{OSys}$, and $\cat{OAlg}$ is provided by \cite[Theorem 4.35]{2412.20999v1}, \Cref{cor:osys.al1.pres} and \Cref{th:oalg.al1} respectively.

\item Finally, the left adjoint to the lower left-hand functor forms the object of \cite[Proposition 8]{zbMATH01199813}, with the monadicity claim handled in \Cref{cor:cast2osys.mndc} above. 
\end{itemize}

\begin{remarks}\label{res:no.mnd}
  \begin{enumerate}[(1),wide]
  \item\label{item:res:no.mnd:not.mnd} A few of the forgetful functors discussed above, in reference to function systems, are \emph{not} monadic. This is the case for the functor $U$ in \Cref{eq:osp.min}, which fails to reflect isomorphisms.

    Indeed, for normed spaces $(S,\|\cdot\|)$ of dimension $\ge 5$ the canonical morphism $\bullet\to \cat{min}(S)$ from the \emph{left} adjoint to $U$ applied to $S$ will fail to be a complete isometry \cite[Theorem 2.14]{zbMATH00152024}, even though its forgetful image is the identity on $(S,\|\cdot\|)\in \cat{Norm}$ and hence invertible.

  \item\label{item:res:no.mnd:uualg} On a related note, it is worthwhile to observe one qualitative distinction between the forgetful functors $\cat{OSp}^{\bullet}\xrightarrow{U}\cat{Norm}^{\bullet}$ and their multiplicative counterparts
    \begin{equation}\label{eq:ualg}
      \cat{OAlg}^{\bullet}
      \xrightarrow{\quad U_{alg}:=\cat{forget}\quad}
      \cat{NAlg}^{\bullet}
      ,\quad
      \bullet\in \left\{\text{blank},\ \wedge\right\}
    \end{equation}
    where $\cat{NAlg}$ denotes the category of unital normed algebras (equipped with a submulticaltive norm such that $\|1\|=1$) and $\cat{NAlg}^{\wedge}$ is the category of unital Banach algebras.
    
    The $\mathrm{max}$ construction of \cite[\S 3.3]{er_os_2000} provides the left adjoints to the two versions of $U$ (whether complete or not), and is fully faithful by its strict-quantization property \cite[\S 3.3, p.49]{er_os_2000}. Further, $U_{alg}$ also has left adjoints: the \emph{enveloping operator algebra} $\cO(A)$ \cite[\S 2.4.6]{bm_oa-mod} of a Banach algebra $A$ (in the complete case), which is also referred to as $\cat{MAXA}(\cA)$ in \cite[post Theorem 18.8]{pls-bk}. Those left adjoints, however, cannot be fully faithful as it is equivalent to the \emph{unit}
    \begin{equation*}
      \id
      \xrightarrow{\quad}
      U_{alg}\circ\left(\text{left adjoint}\right)
    \end{equation*}
    being a natural isomorphism \cite[Proposition 3.4.1]{brcx_hndbk-1}. However, the functor $U_{alg}$ is not even essentially surjective on objects. Indeed, a necessary condition for a Banach algebra to be an operator algebra (and hence, completely-isometrically embeddable in $\cL(H)$ for some Hilbert space $H$ \cite[Corollary 16.7]{pls-bk}) is that it satisfy the \emph{von Neumann inequality} \cite[Corollary 1.2]{pls-bk}:
    \begin{equation*}
      \forall\left(a\in A\right)
      \forall\left(p\in \bC[x]\right)
      \left(\|a\|\le 1 \xRightarrow{\quad} \|p(a)\|\le \sup_{\bD}|p|\right)
      ,\quad
      \bD:=\text{unit disk in }\bC.
    \end{equation*}
    Even finite-dimensional Banach algebras, generally, do not satisfy the von Neumann inequality: the endomorphism algebra $\cL(E)$ of a complex Banach space $E$, for instance, satisfies the von Neumann inequality precisely when $E$ is isometric to a Hilbert space \cite[Theorem 1.9]{pis_sim}. In fact, it is unknown whether the von Neumann inequality exactly defines those unital Banach algebras that may be given the structure of an operator algebra \cite[p.226]{pls-bk}.
    
  \item\label{item:res:no.mnd:mnd.lift} Consider the diagram
    \begin{equation}\label{eq:oalgsp.dmnd}
      \begin{tikzpicture}[>=stealth,auto,baseline=(current  bounding  box.center)]
        \path[anchor=base] 
        (0,0) node (l) {$\cat{OAlg}^{\bullet}$}
        +(2,.5) node (u) {$\cat{OSp}^{\bullet}$}
        +(2,-.5) node (d) {$\cat{NAlg}^{\bullet}$}
        +(4,0) node (r) {$\cat{Norm}^{\bullet}$}
        ;
        \draw[->] (l) to[bend left=6] node[pos=.5,auto] {$\scriptstyle $} (u);
        \draw[->] (u) to[bend left=6] node[pos=.5,auto] {$\scriptstyle $} (r);
        \draw[->] (l) to[bend right=6] node[pos=.5,auto,swap] {$\scriptstyle \ast$} (d);
        \draw[->] (d) to[bend right=6] node[pos=.5,auto,swap] {$\scriptstyle $} (r);
      \end{tikzpicture}
    \end{equation}
    of forgetful functors. We make the following observations:
    \begin{itemize}[wide]
    \item The upward arrows are monadic (\Cref{th:env.oalg} for the top and the bottom follows from \cite[Theorem 5.1]{zbMATH07469564}).

    \item The top downward arrow is, as discussed \cite[\S 3.3]{er_os_2000}, a right (as well as a left) adjoint.

    \item The leftmost corner is cocomplete (\Cref{th:oalg.al1}).
    \end{itemize}
    This is enough to ensure the existence of a left adjoint to the marked arrow by \emph{monadic lifting} \cite[Theorem 4.5.6]{brcx_hndbk-2}, giving an alternative take on the aforementioned operator-algebra envelope constructed in \cite[\S 2.4.6]{bm_oa-mod}.
  \end{enumerate}
\end{remarks}

We build on \Cref{res:no.mnd}\Cref{item:res:no.mnd:uualg} by raising another fundamental distinction between the functors $U$ and $U_{alg}$. Although the former are \emph{left} adjoints as well as right (with the minimal $\cat{min}$ functor being the right adjoints), the latter cannot be.

\begin{proposition}\label{pr:coeq.not.coprod}
  The forgetful functors \Cref{eq:ualg} preserve coequalizers but not arbitrary (binary) coproducts. In particular, said functors are not left adjoints. 
\end{proposition}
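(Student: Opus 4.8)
The plan is to establish the two preservation statements separately and then read off the final clause formally; I write $U_{alg}$ for either functor in \Cref{eq:ualg} and treat $\bullet\in\{\text{blank},\wedge\}$ uniformly. The conceptual point is that coequalizers are computed by the same ideal-quotient in both categories, whereas the free products underlying binary coproducts carry genuinely different norms: the $\cat{OAlg}^{\bullet}$-norm is a supremum over completely contractive (equivalently, Hilbert-space) representations and hence obeys the von Neumann inequality, while the $\cat{NAlg}^{\bullet}$-norm ranges over all merely contractive representations and need not.

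For coequalizers, let $A\rightrightarrows B$ be a parallel pair in $\cat{OAlg}^{\bullet}$. As in step \Cref{item:th:env.oalg:pf.mndc} of the proof of \Cref{th:env.oalg}, both the $\cat{OAlg}^{\bullet}$- and the $\cat{NAlg}^{\bullet}$-coequalizer are the quotient $B/J$ by the closed two-sided ideal $J$ generated by the image of $d_0-d_1$, the quotient inheriting an (operator-)algebra multiplication from the projectivity \cite[Corollary 5.7(ii)]{pis_os} of $\otimes_h$. Since the ground-level norm of the operator-space quotient $B/J$ is exactly the Banach quotient norm, $U_{alg}$ carries the operator-algebra quotient to the normed-algebra quotient; thus both coequalizers have the same underlying normed algebra and $U_{alg}$ preserves them.

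For coproducts, the underlying unital algebra of either free product $A_1\sqcup A_2$ is the algebraic free product $A_1\ast_{\bC}A_2$, and the universal property of the $\cat{NAlg}^{\bullet}$-coproduct furnishes a canonical contractive comparison morphism $\phi\colon U_{alg}A_1\sqcup_{\cat{NAlg}^{\bullet}}U_{alg}A_2\to U_{alg}\!\left(A_1\sqcup_{\cat{OAlg}^{\bullet}}A_2\right)$ which is the (dense-image) identity on $A_1\ast_{\bC}A_2$. Hence $U_{alg}$ preserves this coproduct precisely when $\phi$ is isometric, i.e. when the two free-product norms agree. Now $A_1\sqcup_{\cat{OAlg}^{\bullet}}A_2$ is an operator algebra, so it embeds completely isometrically into some $\cL(H)$ \cite[Corollary 16.7]{pls-bk} and every one of its elements satisfies the von Neumann inequality \cite[Corollary 1.2]{pls-bk}. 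I would therefore fix suitable operator algebras $A_1,A_2$ (for instance two copies of the universal operator algebra on a single contraction) and exhibit, inside $U_{alg}A_1\sqcup_{\cat{NAlg}^{\bullet}}U_{alg}A_2$, a contraction $w$ built from the two free generators together with a polynomial $p$ for which $\|p(w)\|>\sup_{\bD}|p|$; any such $w$ forces $\phi$ to be non-isometric, so the coproduct is not preserved.

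The main obstacle is exactly the construction of this violating contraction, equivalently of a contractive representation of the Banach free product on a non-Hilbertian normed algebra on which a distinguished contraction fails von Neumann's inequality. This is where the gap between completely contractive and merely contractive maps---invisible for the space-level functor $U$ of \Cref{eq:osp.min}, which is a two-sided adjoint and therefore does preserve coproducts---becomes decisive, and it is the one step I expect to require genuine operator-algebraic input rather than a formal manipulation. Granting such an example, the final clause is formal: a left adjoint preserves all colimits, in particular binary coproducts, so the failure just exhibited shows that $U_{alg}$ is not a left adjoint, consistently with its being a right adjoint via the enveloping-operator-algebra left adjoint recorded in \Cref{res:no.mnd}\Cref{item:res:no.mnd:uualg}.
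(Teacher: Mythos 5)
Your coequalizer half matches the paper's argument (quotient by the closed ideal generated by the differences, operator-algebra structure via projectivity of $\otimes_h$, same underlying normed-algebra quotient), and your reduction of the final clause to colimit preservation is fine. The problem is the coproduct half: the entire argument hinges on producing, inside the $\cat{NAlg}^{\bullet}$-free product of your two chosen algebras, a contraction $w$ and a polynomial $p$ with $\|p(w)\|>\sup_{\bD}|p|$, and you do not construct it --- you explicitly defer it as ``the one step I expect to require genuine operator-algebraic input.'' That step is not routine. Note in particular that for your suggested choice (two copies of the universal operator algebra on a contraction, i.e.\ the disk algebra), \emph{each} free generator of the Banach free product automatically satisfies the von Neumann inequality, since a unital contraction $A(\bD)\to B$ sends $z$ to an element obeying $\|p(a)\|\le\sup_{\bD}|p|$ by definition of the disk-algebra norm. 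So the violating contraction would have to be a genuine word in both generators, and exhibiting one is a Varopoulos/Kaijser-type problem in the Banach-algebra (not Hilbert-space) setting; as written the proof is incomplete at its crux.

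The paper sidesteps this entirely by a softer argument: it takes the coproduct of two copies of $C(\bD)$ and shows the canonical map $C(\bD)\coprod^{\cat{NAlg}^{\wedge}}C(\bD)\to C(\bD)\coprod^{\cat{OAlg}^{\wedge}}C(\bD)$ is not even \emph{surjective}. Passing to abelianizations, the left-hand side becomes the projective tensor product $C(\bD)\widehat{\otimes}_{\pi}C(\bD)$ (the maximal Banach tensor product of commutative Banach algebras is the universal recipient of two commuting contractive morphisms \cite{zbMATH03156609}), while the right-hand side abelianizes to $C(\bD\times\bD)\cong C(\bD)\widehat{\otimes}_{\varepsilon}C(\bD)$ \cite[p.261]{pls-bk}; the canonical map $\widehat{\otimes}_{\pi}\to\widehat{\otimes}_{\varepsilon}$ is known not to be surjective \cite[\S II.2.6, Proposition 2.50]{hlmsk_homolog} (indeed, being injective, it would otherwise be an isomorphism by the open mapping theorem). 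If you want to salvage your norm-comparison route, you would need to either import such a known non-isometry result or carry out the hard construction you postponed; as it stands, replacing your third paragraph by the abelianization argument is the cleanest fix.
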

\begin{proof}
  For simplicity, we focus on the complete categories. The coequalizer of a pair $A\xrightarrow{f,g}B$ in $\cat{OAlg}^{\wedge}$ is the operator-space quotient \cite[\S 3.1]{er_os_2000} by the closed ideal generated by all $f(a)-g(a)$, $a\in A$. It inherits an operator-algebra structure by the projectivity of the Haagerup tensor product, much as in the proof of \Cref{th:env.oalg}. 

  As to the failure to preserve (even binary) coproducts, it suffices to consider one such example: the coproduct of two copies of $C(\bD)$, the continuous complex-valued function algebra on the unit disk $\bD$. Upon decorating coproducts by the respective categories that house them, we have a canonical morphism
  \begin{equation*}
    C(\bD)\coprod^{\cat{NAlg}^{\wedge}}C(\bD)
    \xrightarrow{\quad}
    C(\bD)\coprod^{\cat{OAlg}^{\wedge}}C(\bD)
    \quad\text{in}\quad
    \cat{NAlg}^{\wedge}
  \end{equation*}
  which we claim cannot be onto. Indeed, if it were so, it would induce a surjection between the \emph{abelianizations} of the two Banach algebras. However, that morphism coincides with the canonical morphism
  \begin{equation}\label{eq:cdcd}
    C(\bD)\widehat{\otimes}_{\pi} C(\bD)
    \xrightarrow{\quad}
    C(\bD)\widehat{\otimes}_{\varepsilon} C(\bD)
    \overset{\text{\cite[Example 4.2(3)]{df_tens-id}}}{\cong}
    C(\bD\times \bD)
  \end{equation}
  from the \emph{projective} \cite[\S 2.1]{ryan_ban} to the \emph{injective} \cite[\S 3.1]{ryan_ban} Banach-space tensor product. Indeed, the maximal Banach tensor product of Banach algebras is again naturally a Banach algebra \cite[Theorem 1]{zbMATH03156609} and hence, the universal recipient of two mutually-commuting maps. On the other hand, the identification
  \begin{equation*}
    \left(C(\bD)\coprod^{\cat{OAlg}^{\wedge}}C(\bD)\right)_{ab}
    \cong
    C(\bD\times \bD)
  \end{equation*}
  is noted in somewhat different phrasing on \cite[p.261]{pls-bk} (where the free operator algebra on two commuting contractions is identified with the polynomial algebra on $\bD^2$).

  The conclusion now follows from the non-surjectivity of \Cref{eq:cdcd}, which is not a linear-topological isomorphism by \cite[\S II.2.6, Proposition 2.50]{hlmsk_homolog}. In fact, since it is an injective map between Banach spaces \cite[\S 12.10]{df_tens-id}, it would be an isomorphism if it were surjective \cite[Theorem III.12.5]{conw_fa}. 
\end{proof}

Given the partial monadicity of \Cref{eq:oalgsp.dmnd}, noted in \Cref{res:no.mnd}\Cref{item:res:no.mnd:mnd.lift} above, a natural question that is very much in the same circle of ideas is whether the asterisk-marked morphisms in that diagram are monadic. That they are not is another bit of misbehavior, in the spirit of \Cref{pr:coeq.not.coprod}, which disappears in the non-multiplicative setting. A brief reminder will help to make sense of the statement. 

Recall \cite[\S 3.2, post Proposition 2]{bw} that an adjunction
\begin{equation*}
  \begin{tikzpicture}[>=stealth,auto,baseline=(current  bounding  box.center)]
    \path[anchor=base] 
    (0,0) node (l) {$\cC$}
    +(4,0) node (r) {$\cD$}
    +(2,0) node () {$\bot$}
    ;
    \draw[->] (l) to[bend left=20] node[pos=.5,auto] {$\scriptstyle F$} (r);
    \draw[->] (r) to[bend left=20] node[pos=.5,auto] {$\scriptstyle U$} (l);
  \end{tikzpicture}
\end{equation*}
induces a canonical \emph{comparison functor} $\cD\to \cC^{UF}$. Monadicity (for $U$) means precisely that the comparison functor is equivalent, while $U$ is sometimes (e.g. \cite[p.954]{pr_tc-1}) said to be \emph{pre-monadic} if the comparison functor is fully faithful. 

\begin{theorem}\label{th:not.pre.mnd}
  Suppose $A\in \cat{OAlg}^{\bullet}$ admits a contractive morphism $A\xrightarrow{\rho} \cL(H)$ which is not completely contractive for some Hilbert space $H$.
  \begin{enumerate}[(1),wide]
  \item\label{item:th:not.pre.mnd:counit} Denoting by $F$ the left adjoint to the forgetful functor $U_{alg}$ of \Cref{eq:ualg}, the counit $FU(A)\to A$ is bijective but not completely isometric, so in particular not a regular epimorphism in $\cat{OAlg}$.

  \item\label{item:th:not.pre.mnd:iso.refl} $A$ is the codomain of an isometry in $\cat{OAlg}$ that is not a complete isometry.

  \item\label{item:th:not.pre.mnd:wtns} In particular, such an object $A$ witnesses the failure of both pre-monadicity and monadicity for $U_{alg}$. 
  \end{enumerate}
\end{theorem}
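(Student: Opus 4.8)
Throughout write $U:=U_{alg}$ and let $F$ be its left adjoint, the \emph{enveloping operator algebra} functor $\cO$ of \cite[\S 2.4.6]{bm_oa-mod} (the $\cat{MAXA}$ construction of \cite[post Theorem 18.8]{pls-bk}); note that, unlike the left adjoint to the non-multiplicative $U$, this $F$ is \emph{not} a strict quantization, so some care is needed. The first step is to pin down the counit $\varepsilon_A\colon FU(A)\to A$. It is the morphism obtained from the universal property of $F$ applied to the identity contractive homomorphism $U(A)\to U(A)$, hence is the identity on underlying algebras and in particular bijective. Next I would observe that, because $A$ is an operator algebra, it admits a completely isometric representation $A\hookrightarrow\cL(H)$ \cite[Corollary 16.7]{pls-bk}, which is contractive as a homomorphism out of $U(A)$; comparing this with the defining supremum for the enveloping-operator-algebra norm shows that the latter coincides with the norm of $A$ at the first matrix level. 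Thus $\varepsilon_A$ is bijective and isometric at level one, which disposes of the easy assertions in \Cref{item:th:not.pre.mnd:counit} and identifies $\varepsilon_A$ as the natural candidate for the isometry sought in \Cref{item:th:not.pre.mnd:iso.refl}.

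The crux of \Cref{item:th:not.pre.mnd:counit} is that $\varepsilon_A$ fails to be \emph{completely} isometric, and here I would use the hypothesis. Regarding $\rho$ as a contractive homomorphism $U(A)\to U\cL(H)$ in $\cat{NAlg}^{\bullet}$, and using that $\cL(H)$ is an operator algebra, the universal property of $F$ yields a completely contractive $\wt\rho\colon FU(A)\to\cL(H)$ whose underlying map is $\rho$. Were $\varepsilon_A$ completely isometric it would, being bijective, be an isomorphism in $\cat{OAlg}^{\bullet}$, and then $\wt\rho\circ\varepsilon_A^{-1}\colon A\to\cL(H)$ would be a completely contractive morphism with underlying map $\rho$ (using that $\varepsilon_A$ is the identity on underlying algebras). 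This forces $\rho$ to be completely contractive, contradicting the hypothesis; hence $\varepsilon_A$ is not completely isometric.

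To finish \Cref{item:th:not.pre.mnd:counit} I would invoke the description of coequalizers in $\cat{OAlg}$ from the proof of \Cref{th:env.oalg}: every regular epimorphism is a complete quotient map $B\twoheadrightarrow B/I$ by a closed ideal \cite[\S 3.1]{er_os_2000}, and such a map is bijective only when $I=0$, in which case it is a complete isometry. Since $\varepsilon_A$ is bijective but not completely isometric, it is not a regular epimorphism, completing \Cref{item:th:not.pre.mnd:counit}. Assertion \Cref{item:th:not.pre.mnd:iso.refl} is then immediate: $\varepsilon_A$ itself is a morphism of $\cat{OAlg}^{\bullet}$ with codomain $A$, isometric at level one yet not a complete isometry.

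For \Cref{item:th:not.pre.mnd:wtns} I would appeal to the standard premonadicity criterion: if the comparison functor $\cat{OAlg}^{\bullet}\to(\cat{NAlg}^{\bullet})^{T}$, $T:=UF$, is fully faithful, then each counit component is the coequalizer of its canonical $U$-contractible pair and so, in particular, a regular epimorphism (cf.\ \cite[\S 3.3]{bw} and \cite[\S 4.4]{brcx_hndbk-2}, with the terminology of \cite[p.954]{pr_tc-1}). The counit $\varepsilon_A$ produced above is not a regular epimorphism, so $U_{alg}$ is not premonadic, and a fortiori not monadic. The main obstacle I anticipate is precisely this last bridge: isolating the premonadicity criterion in the exact one-directional form needed (fully faithful comparison $\Rightarrow$ counits are regular epimorphisms) and, in tandem, confirming from \Cref{th:env.oalg} that the regular epimorphisms in $\cat{OAlg}^{\bullet}$ are exactly the complete quotient maps, so that a bijective one is automatically a complete isometry. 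By contrast, the analytic core — the level-one norm identification and the production of $\wt\rho$ — is routine once the universal property of the enveloping operator algebra is in hand.
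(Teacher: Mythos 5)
Your proposal is correct, and for parts (1) and (3) it follows essentially the paper's route: identify the counit of the $\cat{MAXA}$/enveloping-operator-algebra adjunction, use the completely isometric representation of $A$ to see it is bijective (and isometric at the first level), use the adjunct $\wt\rho$ of $\rho$ to see it is not completely isometric, observe that regular epimorphisms in $\cat{OAlg}^{\bullet}$ are (complete) quotient maps so a non-isomorphic bijection cannot be one, and then invoke the Barr--Wells characterization of pre-monadicity via counits being regular epimorphisms; all of these steps check out, and the one you flag as the anticipated obstacle is exactly the cited standard fact. The only genuine divergence is in part (2): the paper builds a fresh witness by forming $\rho\oplus\iota$ with $\iota$ a completely isometric representation and taking the inverse map $(\rho\oplus\iota)(A)\to A$, whereas you reuse the counit $\varepsilon_A\colon FU(A)\to A$ itself, having already checked it is isometric at level one but not completely isometric. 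Your version is more economical and also immediately exhibits the failure of isomorphism reflection (since $U_{alg}\varepsilon_A$ is an isomorphism in $\cat{NAlg}^{\bullet}$), which is the paper's stated second route to non-monadicity; the paper's construction has the mild virtue of producing a witness with a concretely represented domain $(\rho\oplus\iota)(A)\subseteq\cL(H\oplus K)$, matching the classical Parrott-type examples it cites. One small point of care in your write-up: in the incomplete case ($\bullet$ blank) the ideal in the coequalizer description need not be taken closed, but this does not affect the argument, since all you need is that a bijective (complete) quotient map is a complete isometry.
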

\begin{proof}
  \begin{enumerate}[label={},wide]
  \item\textbf{\Cref{item:th:not.pre.mnd:wtns}} Recall that pre-monadicity is equivalent \cite[\S 3.3, Theorem 9]{bw} to all counits being \emph{regular epimorphisms}, i.e. \cite[\S 1.7, Exercise (REGMON)$^{\circ}$]{bw} coequalizers of parallel arrow pairs. Thus, pre-monadicity failure follows from \Cref{item:th:not.pre.mnd:counit}.

    This surely rules out monadicity (it being the stronger property), but \Cref{item:th:not.pre.mnd:iso.refl} also achieves this directly. Indeed, monadicity requires \cite[\S 3.3, Theorem 10]{bw} isomorphism reflection and a morphism in $\cat{OAlg}^{\bullet}$ that is isometric but not completely isometric is precisely a non-isomorphism that becomes an isomorphism upon forgetting to $\cat{NAlg}^{\bullet}$.

  \item\textbf{\Cref{item:th:not.pre.mnd:counit}} The last claim follows from the rest: regular epimorphisms in $\cat{OAlg}$ are quotients in the sense of \cite[\S 2.4]{pis_os}, so a non-isomorphic bijection cannot be one. 

    As to the principal claim, it follows from the description of $F$ given in \cite[post Theorem 18.8]{pls-bk} (where the functor is denoted by $\cat{MAXA(-)}$). Indeed, $B\to UF(B)$ is bijective as soon as $B$ embeds isometrically into an operator algebra and hence, this implies the bijectivity of $FU(A)\xrightarrow{\varepsilon} A$ for every $A\in \cat{OAlg}$. On the other hand, our assumption on $A$ means that, for some $n$, the map $\varepsilon\otimes \id_{M_n}$ cannot be isometric.
    
  \item\textbf{\Cref{item:th:not.pre.mnd:iso.refl}} This is effectively what \cite[discussion pre Proposition 7.9]{pls-bk} carries out for the polynomial algebra on three complex-disk variables.

    First note that $A$, being an operator algebra, admits a completely isometric embedding $A\xrightarrow{\iota}\cL(K)$ for some Hilbert space $K$. The representation $A\xrightarrow{\rho\oplus\iota}\cL(H\oplus K)$ will thus be isometric and matricial-norm-non-increasing, strictly so in at least one instance. It follows that
    \begin{equation*}
      \left(
        \left(\rho\oplus\iota\right)(A)
        \xrightarrow{\quad\left(\rho\oplus\iota\right)^{-1}\quad}
        A
      \right)
      \quad
      \in
      \quad
      \cat{OAlg}
    \end{equation*}
    is an isometric non-isomorphism. 
  \end{enumerate}
\end{proof}

Consequently, we have the following.

\begin{corollary}\label{cor:alg.not.pre.mnd}
  The forgetful functors \Cref{eq:ualg} are not pre-monadic and, in particular, are not monadic. 
\end{corollary}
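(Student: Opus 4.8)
The plan is to obtain the corollary as an immediate application of \Cref{th:not.pre.mnd}. That theorem shows that the mere existence of one operator algebra $A\in\cat{OAlg}^{\bullet}$ carrying a contractive homomorphism $\rho\colon A\to\cL(H)$ that is \emph{not} completely contractive already forces, through part \Cref{item:th:not.pre.mnd:wtns}, the failure of both pre-monadicity and monadicity for $U_{alg}$. Thus the only task remaining is to \emph{produce} such a pair $(A,\rho)$, for each of $\bullet\in\{\text{blank},\wedge\}$.

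For this I would invoke the classical three-variable obstruction to the von Neumann inequality. Fixing the ground field $\bC$, take $A$ to be the function algebra on the tridisk---the polydisk algebra $A(\bD^3)$ in the complete case, and the dense polynomial subalgebra $\bC[z_1,z_2,z_3]$ in the non-complete case---each endowed with the operator-algebra structure it inherits from its completely isometric (that is, $\cat{min}$) inclusion into the commutative $C^*$-algebra $C(\bT^3)$. A unital homomorphism $\rho\colon A\to\cL(H)$ is then the same datum as a commuting triple $T_i=\rho(z_i)$; it is contractive exactly when the triple obeys the scalar von Neumann inequality, and completely contractive exactly when the triple admits a commuting unitary dilation. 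The sought-after $\rho$ is the one furnished by the three-variable phenomenon recorded in \cite[discussion pre Proposition 7.9]{pls-bk}---already invoked in the proof of \Cref{th:not.pre.mnd}\Cref{item:th:not.pre.mnd:iso.refl}---namely a commuting triple satisfying the scalar inequality yet possessing no commuting unitary dilation, so that $\rho$ is contractive but not completely contractive. That three variables are genuinely needed is forced by von Neumann's and Ando's dilation theorems, which render every contractive representation of $A(\bD)$ and of $A(\bD^2)$ automatically completely contractive.

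Feeding $(A,\rho)$ into \Cref{th:not.pre.mnd} then yields the two failures simultaneously, and the argument is uniform across $\bullet\in\{\text{blank},\wedge\}$ because the witness to non-complete-contractivity is a single finite matrix of polynomials, which lies in $\bC[z_1,z_2,z_3]$ just as it lies in the completion $A(\bD^3)$. The step demanding the most care is precisely the choice of the triple: one must quote the example in the sharp form ``\emph{scalar} von Neumann holds but \emph{matricial} von Neumann fails'', rather than the coarser Varopoulos--Kaijser examples in which even the scalar inequality is violated (those would make $\rho$ fail to be a $\cat{NAlg}^{\bullet}$-morphism in the first place). Once this is pinned down, all of the categorical content is carried by \Cref{th:not.pre.mnd}.
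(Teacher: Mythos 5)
Your proposal is correct and follows essentially the same route as the paper: both reduce the corollary to exhibiting an operator algebra satisfying the hypothesis of \Cref{th:not.pre.mnd}, and both take the polynomial/polydisk algebra in $n\ge 3$ variables with its operator-algebra structure inherited from $C(\bT^n)$, witnessed by Parrott's example of a commuting triple obeying the scalar von Neumann inequality yet admitting no commuting unitary dilation. Your added care in distinguishing the Parrott-type example from the Varopoulos--Kaijser ones (where even contractivity fails) is exactly the right point, and matches the paper's citation of Parrott.
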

\begin{proof}
  Focusing on the incomplete branch to fix the language, a celebrated example due to Parrott: \cite[pp.89-91]{pls-bk}, \cite[Example 25.18]{pis_os} shows that the polynomial algebra $P(\bD^n)$ on $n$-variables, $n\geq 3$, in the unit disk $\bD\subset \bC$, equipped with its operator-algebra structure inherited from $P(\bD^n)\le C(\bT^n)$, satisfies the hypothesis of \Cref{th:not.pre.mnd}. 
\end{proof}

\def\polhk#1{\setbox0=\hbox{#1}{\ooalign{\hidewidth
  \lower1.5ex\hbox{`}\hidewidth\crcr\unhbox0}}}

\addcontentsline{toc}{section}{References}

\Addresses

\end{document}